\documentclass[12pt]{article}

\usepackage{graphicx}        
\usepackage{multicol}        
\usepackage[bottom]{footmisc}
\usepackage{a4wide,amsmath,amssymb,amsthm,xcolor,booktabs}

\newtheorem{theorem}{Theorem}[section]

\newtheorem{lemma}[theorem]{Lemma}

\begin{document}

\title{Numerical solution of elliptic distributed optimal control
  problems with boundary value tracking
  \footnote{This is an extended version of our paper that is accepted for publication in 
  the DD29 proceedings.}}
\author{Ulrich Langer\footnote{Institute of Numerical Mathematics, 
          JKU,
          Linz,
          Austria, ulanger@numa.uni-linz.ac.at},
Richard L\"oscher\footnote{Institut f\"{u}r Angewandte Mathematik, TU Graz, Austria, loescher@math.tugraz.at}, 
Olaf Steinbach\footnote{Institut f\"{u}r Angewandte Mathematik, TU Graz, Austria, o.steinbach@tugraz.at}, 
Huidong Yang\footnote{Faculty of Mathematics, University of Vienna, Austria, huidong.yang@univie.ac.at}
}

\maketitle

\abstract{
We consider some boundary value tracking optimal control problem 
constrained by a Neumann boundary value problem for some elliptic 
partial differential equation where the control acts as 
right-hand side. This optimal control problem can be reformulated 
as a state-based variational problem that is the starting point 
for the finite element discretizion. In this paper, we only consider a
tensor-product finite element discretizion for which optimal 
discretization error estimates and fast solvers can be derived.
Numerical experiments illustrate the theoretical results quantitatively.
}


\section{Introduction}
\label{Sec1:Introduction}
Let us consider the following boundary tracking optimal control problem (ocp):
Find the state 
$y_\varrho \in H^1(\Omega)$
and the optimal control $u_\varrho \in U$ minimizing the cost functional 
\begin{equation}\label{Eqn:CostFunctional}
   J(y_\varrho,u_\varrho) =
\frac{1}{2} \, \| y_\varrho - \overline{y} \|_{L^2(\Gamma)}^2  + \frac{1}{2} \, \varrho \, \|u_\varrho\|_U^2
\end{equation}
subject to the Neumann boundary value problem (bvp)
\begin{equation}\label{Eqn:NeumannBVP}
  - \Delta y_\varrho + y_\varrho = u_\varrho \quad \mbox{in} \; \Omega, \quad 
  \partial_n y_\varrho = 0 \quad \mbox{on} \; \Gamma, 
\end{equation}
where
$\overline{y} \in L^2(\Gamma)$ denotes a
given target, $\varrho \in {\mathbb{R}}_+$ is a positive regularization or
cost parameter, and $\Omega \subset \mathbb{R}^d$, $d=2,3$, is a bounded
Lipschitz domain with the boundary $\Gamma = \partial \Omega$.
Our work 
was
inspired by the paper \cite{MardalNielsenNordaas2017BIT},
where the $L^2$ regularization corresponding to the choice $U=L^2(\Omega)$
was investigated. 
The boundary tracking is a special case of the partial tracking of a given
target in a subset of $\Omega$ that is also called limited observation;
see, e.g., \cite{MardalSognTakacs2022SINUM}. 
There are many important practical applications, e.g., inverse heat
transfer, photoacoustic tomography, etc., see also \cite{MardalNielsenNordaas2017BIT}.
In this contribution, we consider the energy regularization
corresponding to the choice $U = \widetilde{H}^{-1}(\Omega) := [H^1(\Omega)]^*$
such that the solution of \eqref{Eqn:NeumannBVP} defines an isomorphism
  when considering $y_\varrho \in H^1(\Omega)$. When using $U=L^2(\Omega)$
  this implies $y_\varrho \in H^1_\Delta(\Omega) := \{ y \in H^1(\Omega) :
  \Delta y \in L^2(\Omega)\}$ to ensure an isomorphism, which afterwards
  requires higher order basis functions for a conforming discretization.
Here we use the standard notations for Lebesgue and Sobolev spaces;
see, e.g., \cite{Steinbach2008Book}.
In order to follow the abstract theory presented in
\cite{LangerLoescherSteinbachYang2025MCRF}, we define the state space in such
a way that the state-to-control map is an isomorphism. This allows us to
derive a state-based formulation which is the basis for the numerical solution.
Here we restrict the analysis to a conforming
tensor-product finite element (fe) discretization
that finally leads to a linear system of algebraic equations for which fast
solvers can be constructed. Note that $c$ denotes a universal constant
independent of the regularization parameter $\varrho$, and the
discretization parameter $h$.
%
%

\section{State-based variational reformulation}
\label{Sec3:VF}
The variational formulation of the Neumann 
boundary value problem
\eqref{Eqn:NeumannBVP} reads to find $y_\varrho \in H^1(\Omega)$
such that
\begin{equation}\label{Eqn:NeumannBVPVF}
  \langle \nabla y_\varrho , \nabla y \rangle_{L^2(\Omega)} +
  \langle y_\varrho , y \rangle_{L^2(\Omega)} =
  \langle u_\varrho , y \rangle_{\Omega}
\end{equation}
is satisfied for all $y \in H^1(\Omega)$, where we assume
$u_\varrho \in \widetilde{H}^{-1}(\Omega)$. 
The variational problem has a unique solution due to Lax-Milgram's lemma; see, e.g., \cite{Steinbach2008Book}.
While the Neumann boundary
condition in \eqref{Eqn:NeumannBVP} enters the variational formulation
\eqref{Eqn:NeumannBVPVF} in a natural way, this condition has to be
included in the definition of the state space
\[
  Y := \Big \{ y \in H^1(\Omega) : 
  \langle \partial_n y , \phi \rangle_\Gamma
  = 0 \; \mbox{for all} \; \phi \in H^{1/2}(\Gamma) \Big \}.
\]
When using duality arguments, we then conclude
\[
  \| u_\varrho \|_{Y^*} :=
  \sup\limits_{0 \neq y \in Y}
  \frac{\langle u_\varrho , y \rangle_\Omega}
  {\| y \|_{H^1(\Omega)}} =
  \sup\limits_{0 \neq y \in Y}
  \frac{\langle y_\varrho , y \rangle_{H^1(\Omega)}}
  {\| y \|_{H^1(\Omega)}} =
  \| y_\varrho \|_{H^1(\Omega)},
\]
and instead of \eqref{Eqn:CostFunctional} we can consider the reduced
state-based cost functional
\begin{equation}\label{Eqn:ReducedCostFunctional}
  \widetilde{J}(y_\varrho) =
  \frac{1}{2} \, \| y_\varrho - \overline{y} \|^2_{L^2(\Gamma)} +
  \frac{1}{2} \, \varrho \, \| y_\varrho \|^2_{H^1(\Omega)} ,
\end{equation}
whose minimizer $y_\varrho \in Y$ is the unique solution of the gradient
equation satisfying
\begin{equation}\label{Eqn:GradientEquationVF}
  \langle y_\varrho , y \rangle_{L^2(\Gamma)} + \varrho \,
  \langle y_\varrho , y \rangle_{H^1(\Omega)} =
  \langle \overline{y} , y \rangle_{L^2(\Gamma)} \quad
  \mbox{for all} \; y \in Y.
\end{equation}
Following the abstract theory as given in
\cite[Lemma 2.1]{LangerLoescherSteinbachYang2025MCRF}, we have the following
regularization error estimate.

\begin{lemma}
\label{Lemma:RegularizationEstimates}
  Let $y_\varrho \in Y$ be the unique solution of the variational formulation
  \eqref{Eqn:GradientEquationVF}. For $\overline{y} \in L^2(\Gamma)$
  there hold the estimates
  \begin{equation}\label{Eqn:Regularization error L2}
    \| y_\varrho - \overline{y} \|_{L^2(\Gamma)} \leq
    \| \overline{y} \|_{L^2(\Gamma)}, \quad
    \| y_\varrho \|_{H^1(\Omega)} \leq
    \varrho^{-1/2} \, \| \overline{y} \|_{L^2(\Gamma)} .
  \end{equation}
  If\, $\overline{y} \in H^{1/2}(\Gamma)$ is the Dirichlet trace of\,
  $\overline{y}_e \in Y$, then
  $\| y_\varrho \|_{H^1(\Omega)} \leq \| \overline{y}_e \|_{H^1(\Omega)}$, and
  \begin{equation}\label{Eqn:Regularization H12}
    \| y_\varrho - \overline{y} \|_{L^2(\Gamma)} \leq
    \varrho^{1/2} \, \| \overline{y}_e \|_{H^1(\Omega)}, \quad
    \| y_\varrho - \overline{y}_e \|_{H^1(\Omega)} \leq
    \| \overline{y}_e \|_{H^1(\Omega)}.
  \end{equation}
    If $\overline{y} \in H^1(\Gamma)$ is the Dirichlet trace of
  $\overline{y}_e \in Y \cap H^{3/2+\varepsilon}(\Omega)$ for some
  $\varepsilon > 0$, then
  \begin{equation}\label{Eqn:Regularization error H1}
    \| y_\varrho - \overline{y} \|_{L^2(\Gamma)}
     \leq c \, \varrho \, \| \overline{y} \|_{H^1(\Gamma)}, \quad
    \| y_\varrho - \overline{y}_e \|_{H^1(\Omega)}
     \leq c \, \varrho^{1/2} \, \| \overline{y} \|_{H^1(\Gamma)} .
   \end{equation}
Here, the constant $c$ depends on the mapping properties of the
partial differential operator $I-\Delta$ in certain Sobolev spaces,
and the trace theorem.
\end{lemma}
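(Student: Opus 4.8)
The plan is to read off the first two groups of bounds for free from the minimality of $y_\varrho$ for $\widetilde J$, and to obtain the sharper third group by reducing the state equation to the boundary via the Steklov--Poincar\'e operator; this is the pattern of \cite[Lemma~2.1]{LangerLoescherSteinbachYang2025MCRF} with $B$ the Dirichlet trace, so one could equally well just verify the abstract hypotheses there. Concretely, for every bound in \eqref{Eqn:Regularization error L2} and \eqref{Eqn:Regularization H12} — except the last one, $\|y_\varrho-\overline y_e\|_{H^1(\Omega)}\le\|\overline y_e\|_{H^1(\Omega)}$ — I would only use that $0\in Y$ and, in the second situation, $\overline y_e\in Y$ with trace $\overline y$: minimality gives $\widetilde J(y_\varrho)\le\widetilde J(0)=\tfrac12\|\overline y\|_{L^2(\Gamma)}^2$ and $\widetilde J(y_\varrho)\le\widetilde J(\overline y_e)=\tfrac12\varrho\|\overline y_e\|_{H^1(\Omega)}^2$, and since $\widetilde J(y_\varrho)=\tfrac12\|y_\varrho-\overline y\|_{L^2(\Gamma)}^2+\tfrac12\varrho\|y_\varrho\|_{H^1(\Omega)}^2$, dropping one or the other of the two nonnegative summands on the left already yields all of these. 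For the remaining bound I would subtract the corresponding terms in \eqref{Eqn:GradientEquationVF} (using $\langle\overline y,v\rangle_{L^2(\Gamma)}=\langle\overline y_e,v\rangle_{L^2(\Gamma)}$) to obtain the error identity
\[
 \langle y_\varrho-\overline y_e,v\rangle_{L^2(\Gamma)}+\varrho\,\langle y_\varrho-\overline y_e,v\rangle_{H^1(\Omega)}=-\varrho\,\langle\overline y_e,v\rangle_{H^1(\Omega)},\qquad v\in Y,
\]
and test with $v=w:=y_\varrho-\overline y_e$, so that $\varrho\|w\|_{H^1(\Omega)}^2\le\varrho\|\overline y_e\|_{H^1(\Omega)}\|w\|_{H^1(\Omega)}$.

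For the improved estimates \eqref{Eqn:Regularization error H1} the decisive observation is that $y_\varrho$ is automatically a weak solution of $-\Delta y_\varrho+y_\varrho=0$ (test \eqref{Eqn:GradientEquationVF} with $v\in H_0^1(\Omega)$), and that the admissible extension may be taken to share this property, i.e.\ $\overline y_e$ is the weak solution of $-\Delta\overline y_e+\overline y_e=0$, $\overline y_e=\overline y$ on $\Gamma$, which for $\overline y$ regular enough lies in $Y\cap H^{3/2+\varepsilon}(\Omega)$ with conormal derivative $\partial_n\overline y_e\in L^2(\Gamma)$. Then Green's first identity collapses $\langle\overline y_e,v\rangle_{H^1(\Omega)}$ to the pure boundary term $\langle\partial_n\overline y_e,v\rangle_{L^2(\Gamma)}$, so the right-hand side of the error identity becomes $-\varrho\,\langle\partial_n\overline y_e,v\rangle_{L^2(\Gamma)}$; testing with $v=w:=y_\varrho-\overline y_e$ and applying Cauchy--Schwarz — crucially against the \emph{trace} norm of $w$, not $\|w\|_{H^1(\Omega)}$ — gives $\|w\|_{L^2(\Gamma)}^2+\varrho\|w\|_{H^1(\Omega)}^2\le\varrho\|\partial_n\overline y_e\|_{L^2(\Gamma)}\|w\|_{L^2(\Gamma)}$, hence $\|y_\varrho-\overline y\|_{L^2(\Gamma)}\le\varrho\|\partial_n\overline y_e\|_{L^2(\Gamma)}$ and, reinserting, $\|y_\varrho-\overline y_e\|_{H^1(\Omega)}\le\varrho^{1/2}\|\partial_n\overline y_e\|_{L^2(\Gamma)}$. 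It then remains to note that $\partial_n\overline y_e=\mathcal S\,\overline y$ with $\mathcal S$ the Steklov--Poincar\'e operator, which maps $H^1(\Gamma)$ boundedly into $L^2(\Gamma)$, so that $\|\partial_n\overline y_e\|_{L^2(\Gamma)}\le c\,\|\overline y\|_{H^1(\Gamma)}$. (Equivalently, on the $(-\Delta+I)$-harmonic subspace the state equation is the boundary equation $(I+\varrho\mathcal S)\gamma y_\varrho=\overline y$ on $\Gamma$, and the two bounds drop out of the elementary spectral estimates $\|(I+\varrho\mathcal S)^{-1}\|\le1$ and $\|\varrho\,\mathcal S^{1/2}(I+\varrho\mathcal S)^{-1}\|\le\tfrac12\varrho^{1/2}$ for the nonnegative self-adjoint operator $\mathcal S$.)

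I expect the one genuine obstacle to be this last ingredient: one must know that the $(-\Delta+I)$-Dirichlet-to-Neumann map is bounded from $H^1(\Gamma)$ into $L^2(\Gamma)$, equivalently that the Dirichlet extension of $\overline y$ lies in $H^{3/2+\varepsilon}(\Omega)$ with an $L^2$ conormal derivative. On smooth or convex $\Omega$ this is the standard shift theorem; on a general Lipschitz domain it is precisely the regularity hypothesis under which \eqref{Eqn:Regularization error H1} is stated, so it is imported rather than proved, and everything else is the bookkeeping above.
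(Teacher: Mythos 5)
The paper itself does not prove this lemma; it imports it from the abstract theory of \cite[Lemma~2.1]{LangerLoescherSteinbachYang2025MCRF}, so the comparison is against that route. Your treatment of \eqref{Eqn:Regularization error L2} and \eqref{Eqn:Regularization H12} is correct and is exactly the standard argument: minimality of \eqref{Eqn:ReducedCostFunctional} against the competitors $0$ and $\overline{y}_e$, plus the error identity tested with $w=y_\varrho-\overline{y}_e$. Your mechanism for \eqref{Eqn:Regularization error H1} --- rewrite the residual $-\varrho\,\langle\overline{y}_e,v\rangle_{H^1(\Omega)}$ as a pure boundary functional, apply Cauchy--Schwarz against $\|w\|_{L^2(\Gamma)}$ rather than $\|w\|_{H^1(\Omega)}$, and control the resulting Neumann datum by the Dirichlet-to-Neumann bound $H^1(\Gamma)\to L^2(\Gamma)$ --- is also the mechanism behind the abstract result (your spectral reformulation $(I+\varrho\,{\mathcal S})\gamma y_\varrho=\overline{y}$ is precisely how the rates in $\varrho$ are read off there).

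The genuine gap is the choice of extension in that third step. Green's identity collapses $\langle\overline{y}_e,v\rangle_{H^1(\Omega)}$ to $\langle\partial_n\overline{y}_e,v\rangle_{L^2(\Gamma)}$ only when $\overline{y}_e$ is the $(-\Delta+I)$-harmonic extension, which in general has $\partial_n\overline{y}_e\neq 0$; but the extension the lemma speaks about is an element of $Y$, identified after the lemma as the biharmonic extension \eqref{Eqn:Def Extension Target} with vanishing normal derivative, for which Green's identity leaves the volume term $\langle-\Delta\overline{y}_e+\overline{y}_e,v\rangle_{\Omega}$ and your boundary Cauchy--Schwarz is unavailable. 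This creates two concrete problems you must resolve. First, admissibility: to test \eqref{Eqn:GradientEquationVF} with $v=w=y_\varrho-\overline{y}_e$ you need $\overline{y}_e\in Y$, and the harmonic extension belongs to $Y$ only under the reading that $Y$ is all of $H^1(\Omega)$ with the Neumann condition interpreted relative to the control --- a reading the paper's insistence on basis functions with vanishing normal traces and its closing remark on Lagrange multipliers do not obviously support. Second, the bound $\|y_\varrho-\overline{y}_e\|_{H^1(\Omega)}\leq c\,\varrho^{1/2}\,\|\overline{y}\|_{H^1(\Gamma)}$ is extension-dependent: two extensions with the same trace differ by an $O(1)$ amount in $H^1(\Omega)$, so as $\varrho\to 0$ at most one choice of $\overline{y}_e$ can satisfy this estimate, and proving it for the harmonic extension does not yield it for the lemma's $\overline{y}_e$. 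You should state explicitly which extension you use, verify that it is an admissible test function, and either show it coincides with the $\overline{y}_e$ of the statement or transfer the estimate to that $\overline{y}_e$. Everything else in the proposal is sound.
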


%
%
\section{Conforming FE discretization on tensor product meshes}
\label{Sec3:FEM}

For a conforming fe discretization of the variational formulation
\eqref{Eqn:GradientEquationVF}, we need to introduce a fe space
$Y_h \subset Y$ of basis functions with zero normal derivatives. 
In this paper, we restrict our considerations to the unit square
$\Omega = (0,1)^d$ which allows us to use appropriate tensor product fe
spaces. Let $\widetilde{S}_h^1(0,1) := \mbox{span} \{ \varphi_i \}_{i=1}^{n-1}$
be the modified fe space of piecewise linear and continuous basis functions
$\varphi_i$ which are defined with respect to a decomposition
$0 = x_0 < x_1 < x_2 < \ldots < x_{n-1} < x_n = 1$ of the unit interval,
with the local mesh sizes $h_i :=x_i-x_{i-1}$, $i=1,\ldots,n$,
and with the global mesh size $h:= \max_i h_i$. While the basis functions
$\varphi_i$ for $i=2,\ldots,n-2$ are the standard piecewise linear and
continuous ones, the basis functions $\varphi_1$ and $\varphi_{n-1}$ are 
$1$ in the intervals $(x_0,x_1)$ and $(x_{n-1},x_n)$, respectively; 
see Fig.~\ref{Fig:Figure Basisfunktionen}.
\begin{figure}[htb]
  \begin{center}
  \unitlength1cm
  \begin{picture}(10,1.5)
    \put(0,0.5){\line(1,0){10}}
    \put(1,0.4){\line(0,1){0.2}}
    \put(2,0.4){\line(0,1){0.2}}
    \put(3,0.4){\line(0,1){0.2}}
    \put(4,0.4){\line(0,1){0.2}}
    \put(6,0.4){\line(0,1){0.2}}
    \put(7,0.4){\line(0,1){0.2}}
    \put(8,0.4){\line(0,1){0.2}}
    \put(9,0.4){\line(0,1){0.2}}
    \put(0.85,0.1){$x_0$}
    \put(1.85,0.1){$x_1$}
    \put(2.85,0.1){$x_2$}
    \put(3.85,0.1){$x_3$}
    \put(5.85,0.1){$x_{n-3}$}
    \put(6.85,0.1){$x_{n-2}$}
    \put(7.85,0.1){$x_{n-1}$}
    \put(8.85,0.1){$x_n$}
    \put(2,0.5){\line(1,1){1}}
    \put(4,0.5){\line(-1,1){1}}
    \put(2.85,1.6){$\varphi_2$}
    \put(6,0.5){\line(1,1){1}}
    \put(8,0.5){\line(-1,1){1}}
    \put(6.85,1.6){$\varphi_{n-2}$}
    \put(1,1.5){\line(1,0){1}}
    \put(3,0.5){\line(-1,1){1}}
    \put(1.85,1.6){$\varphi_1$}
    \put(8,1.5){\line(1,0){1}}
    \put(7,0.5){\line(1,1){1}}
    \put(7.85,1.6){$\varphi_{n-1}$}
  \end{picture}
\end{center}
\caption{Modified piecewise linear basis functions $\varphi_i(x)$, $i=1,\ldots,n-1$.}
\label{Fig:Figure Basisfunktionen}
\end{figure}

\noindent
By construction, we have $\varphi_i'(x) = 0$ for
$x \in \{0,1 \}$, $i=1,\ldots,n-1$.
We now define the conforming fe space
$Y_h = \otimes_{i=1}^d \widetilde{S}_h^1(0,1) =
  \mbox{span} \{ \phi_k \}_{k=1}^m \subset Y $
of piecewise multi-linear continuous basis functions $\phi_k$ with
vanishing Neumann trace $\partial_n \phi_k$ on $\Gamma$. 
We note that $m = (n-1)^d$. The fe discretization of the variational
formulation \eqref{Eqn:GradientEquationVF} leads to the finite element
scheme: Find $y_{\varrho h} \in Y_h$ such that
\begin{equation}\label{Eqn:Gradient equation FEM}
  \langle y_{\varrho h} , y_h \rangle_{L^2(\Gamma)} + \varrho \,
  \langle y_{\varrho h} , y_h \rangle_{H^1(\Omega)} =
  \langle \overline{y} , y_h \rangle_{L^2(\Gamma)} \quad
  \mbox{for all} \; y_h \in Y_h.
\end{equation}
Using standard arguments, we immediately arrive at the Cea-type estimate
\begin{eqnarray}\label{Eqn:Cea}
  && \| y_\varrho - y_{\varrho h} \|_{L^2(\Gamma)}^2 +
     \varrho \, \| y_\varrho - y_{\varrho h} \|^2_{H^1(\Omega)} \\
  && \hspace*{2cm} \leq
     \| y_\varrho - y_h \|_{L^2(\Gamma)}^2 +
     \varrho \, \| y_\varrho - y_h \|^2_{H^1(\Omega)} \quad \mbox{for all}
     \; y_h \in Y_h . \nonumber
\end{eqnarray}

\begin{lemma}
  Let $y_{\varrho h} \in Y_h$ be the unique solution of
  \eqref{Eqn:Gradient equation FEM}. Then,
  for $\overline{y} \in L^2(\Gamma)$, there holds the error estimate
  \begin{equation}\label{Eqn:Energie Error L2}
    \| y_{\varrho h} - \overline{y} \|_{L^2(\Gamma)} \leq (1+\sqrt{2}) \,
    \| \overline{y} \|_{L^2(\Gamma)}.
  \end{equation}
  If $\overline{y} \in H^{1/2}(\Gamma)$ is the Dirichlet trace of
    $\overline{y}_e \in Y$, and choosing $\varrho=h$, we have
  \begin{equation}\label{Eqn:Energie Error H12}
    \| y_{\varrho h} - \overline{y} \|_{L^2(\Gamma)} \leq c \, h^{1/2} \,
    \| \overline{y} \|_{H^{1/2}(\Gamma)} .
  \end{equation}
  For $\overline{y} = \overline{y}_{e|\Gamma} \in H^1(\Gamma)$,
    $\overline{y}_e \in H^{3/2+\varepsilon}(\Omega)$ with $\varepsilon > 0$, 
    and choosing $\varrho=h$, we have
  \begin{equation}\label{Eqn:Energie Error H1}
    \| y_{\varrho h} - \overline{y} \|_{L^2(\Gamma)} \leq c \, h \,
    \| \overline{y} \|_{H^1(\Gamma)} .
  \end{equation}
\end{lemma}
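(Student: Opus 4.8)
The plan is to combine the Céa-type estimate \eqref{Eqn:Cea} with the regularization estimates of Lemma~\ref{Lemma:RegularizationEstimates} and a suitable choice of discrete comparison function $y_h \in Y_h$. The overall strategy is the standard triangle-inequality split
\[
  \| y_{\varrho h} - \overline{y} \|_{L^2(\Gamma)} \leq
  \| y_{\varrho h} - y_\varrho \|_{L^2(\Gamma)} +
  \| y_\varrho - \overline{y} \|_{L^2(\Gamma)},
\]
where the second term is already controlled by Lemma~\ref{Lemma:RegularizationEstimates}, and the first term is the discretization error bounded via \eqref{Eqn:Cea}. The crux in each of the three regimes is to feed \eqref{Eqn:Cea} a good $y_h$ and then balance the two contributions on its right-hand side against the regularization rates, with the coupling $\varrho = h$ making the powers match.

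First I would prove \eqref{Eqn:Energie Error L2}. Here I take $y_h = 0$ in \eqref{Eqn:Cea}, giving
$\| y_\varrho - y_{\varrho h} \|_{L^2(\Gamma)}^2 + \varrho\,\| y_\varrho - y_{\varrho h}\|_{H^1(\Omega)}^2 \leq \| y_\varrho\|_{L^2(\Gamma)}^2 + \varrho\,\| y_\varrho\|_{H^1(\Omega)}^2$. Dropping the $H^1$ term on the left and using $\| y_\varrho\|_{L^2(\Gamma)} \le \| y_\varrho - \overline{y}\|_{L^2(\Gamma)} + \|\overline{y}\|_{L^2(\Gamma)} \le 2\|\overline{y}\|_{L^2(\Gamma)}$ together with $\varrho^{1/2}\| y_\varrho\|_{H^1(\Omega)} \le \|\overline{y}\|_{L^2(\Gamma)}$ (both from \eqref{Eqn:Regularization error L2}) bounds the discretization error in $L^2(\Gamma)$ by $\sqrt{2+ \text{(const)}}\,\|\overline{y}\|_{L^2(\Gamma)}$; combining with $\| y_\varrho - \overline{y}\|_{L^2(\Gamma)} \le \|\overline{y}\|_{L^2(\Gamma)}$ via the triangle inequality yields the factor $1+\sqrt{2}$ after tracking the constants carefully.

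For \eqref{Eqn:Energie Error H12} and \eqref{Eqn:Energie Error H1} I would instead pick $y_h = Q_h y_\varrho$ (or $Q_h \overline{y}_e$), a quasi-interpolant of the regularized state into $Y_h$, and invoke the standard approximation properties of the tensor-product space $Y_h$: $\| y_\varrho - Q_h y_\varrho\|_{L^2(\Gamma)} \le c\,h^{s}\| y_\varrho\|_{H^{s+1/2}(\Gamma\text{-adapted})}$ and $\| y_\varrho - Q_h y_\varrho\|_{H^1(\Omega)} \le c\,h^{s}\| y_\varrho\|_{H^{1+s}(\Omega)}$ for the relevant smoothness $s$. In the $H^{1/2}(\Gamma)$ case one uses the $H^1(\Omega)$-regularity of $\overline{y}_e$ from \eqref{Eqn:Regularization H12}, so that both terms on the right of \eqref{Eqn:Cea} are $O(h)\cdot\|\overline{y}_e\|_{H^1(\Omega)}^2$ when $\varrho = h$; then $\| y_\varrho - y_{\varrho h}\|_{L^2(\Gamma)} = O(h^{1/2})$, and combined with the regularization estimate $\| y_\varrho - \overline{y}\|_{L^2(\Gamma)} \le \varrho^{1/2}\|\overline{y}_e\|_{H^1(\Omega)} = O(h^{1/2})$ this gives \eqref{Eqn:Energie Error H12}, modulo replacing $\|\overline{y}_e\|_{H^1(\Omega)}$ by $\|\overline{y}\|_{H^{1/2}(\Gamma)}$ through a stable extension. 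The $H^1(\Gamma)$ case is analogous but one order higher, using the $H^{3/2+\varepsilon}(\Omega)$-regularity of $\overline{y}_e$ from \eqref{Eqn:Regularization error H1}.

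The main obstacle I anticipate is the interpolation/approximation step on the \emph{boundary} in the two higher-regularity regimes: one needs an estimate of the form $\| v - Q_h v\|_{L^2(\Gamma)} \le c\,h^{s}\,\| v\|_{H^{s}(\Omega)}$-type for the tensor-product modified space $Y_h$, which mixes a trace inequality with the nonstandard basis functions $\varphi_1,\varphi_{n-1}$ near the endpoints. One must check that the modification at the ends does not spoil the local $O(h)$ (resp. $O(h^{2})$) approximation order, and that the trace to $\Gamma$ is handled without losing a full power of $h$ — typically via a scaled trace inequality $\|w\|_{L^2(\Gamma)}^2 \le c(h^{-1}\|w\|_{L^2(\Omega)}^2 + h\|w\|_{H^1(\Omega)}^2)$ applied to $w = v - Q_h v$. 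Getting the balance of powers right there, together with a clean stable extension $\overline{y} \mapsto \overline{y}_e$ so the final bound is in terms of $\|\overline{y}\|_{H^{1/2}(\Gamma)}$ resp. $\|\overline{y}\|_{H^1(\Gamma)}$, is where the real work lies; the rest is bookkeeping with \eqref{Eqn:Cea} and Lemma~\ref{Lemma:RegularizationEstimates}.
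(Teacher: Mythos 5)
Your plan follows the paper's proof almost exactly: $y_h=0$ in the C\'ea estimate \eqref{Eqn:Cea} for the $L^2(\Gamma)$ case, and $y_h$ equal to a projection of the extension $\overline{y}_e$ combined with the regularization estimates \eqref{Eqn:Regularization H12}, \eqref{Eqn:Regularization error H1} for the two smoother cases; the paper takes $y_h=P_h\overline{y}_e$ and bounds the boundary term by $\|\overline{y}_e-P_h\overline{y}_e\|_{L^2(\Gamma)}\le c\,\|\overline{y}_e-P_h\overline{y}_e\|_{H^{1/2}(\Omega)}\le c\,h^{1/2}\|\overline{y}_e\|_{H^1(\Omega)}$ via a trace theorem and space interpolation, whereas your scaled trace inequality $\|w\|_{L^2(\Gamma)}^2\le c(h^{-1}\|w\|_{L^2(\Omega)}^2+h\|w\|_{H^1(\Omega)}^2)$ delivers the same rates $h^{1/2}$ and $h$ from the projection error pairs $(h,1)$ and $(h^{3/2},h^{1/2})$, so that step is a legitimate alternative. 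One remark on your parenthetical option $y_h=Q_hy_\varrho$: in the higher-regularity regimes you must project the extension $\overline{y}_e$ and not the state $y_\varrho$, since $y_\varrho$ carries no regularity beyond $H^1(\Omega)$ and the needed $O(h^{1/2})$, $O(h^{3/2})$ projection errors are only available for $\overline{y}_e\in H^{3/2}(\Omega)$; the term $\varrho\,\|y_\varrho-\overline{y}_e\|_{H^1(\Omega)}^2$ is then absorbed by Lemma~\ref{Lemma:RegularizationEstimates}.

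The one concrete gap is the constant in \eqref{Eqn:Energie Error L2}. Your chain $\|y_\varrho\|_{L^2(\Gamma)}\le\|y_\varrho-\overline{y}\|_{L^2(\Gamma)}+\|\overline{y}\|_{L^2(\Gamma)}\le 2\,\|\overline{y}\|_{L^2(\Gamma)}$ gives
\[
  \|y_\varrho-y_{\varrho h}\|_{L^2(\Gamma)}^2\le\|y_\varrho\|_{L^2(\Gamma)}^2+\varrho\,\|y_\varrho\|_{H^1(\Omega)}^2\le(4+1)\,\|\overline{y}\|_{L^2(\Gamma)}^2,
\]
i.e.\ the factor $1+\sqrt{5}$, not $1+\sqrt{2}$, and no amount of ``tracking the constants carefully'' recovers the stated bound from that estimate. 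The fix is to test the gradient equation \eqref{Eqn:GradientEquationVF} with $y=y_\varrho$, which yields $\|y_\varrho\|_{L^2(\Gamma)}^2+\varrho\,\|y_\varrho\|_{H^1(\Omega)}^2=\langle\overline{y},y_\varrho\rangle_{L^2(\Gamma)}$ and hence $\|y_\varrho\|_{L^2(\Gamma)}\le\|\overline{y}\|_{L^2(\Gamma)}$ directly; together with $\varrho\,\|y_\varrho\|_{H^1(\Omega)}^2\le\|\overline{y}\|_{L^2(\Gamma)}^2$ from \eqref{Eqn:Regularization error L2} this bounds the right-hand side by $2\,\|\overline{y}\|_{L^2(\Gamma)}^2$ and produces the advertised $1+\sqrt{2}$. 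The rest of your outline is sound.
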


\begin{proof}
  For $\overline{y} \in L^2(\Gamma)$, we consider \eqref{Eqn:Cea}
  for $y_h=0$, and we use 
  \eqref{Eqn:GradientEquationVF} and 
  \eqref{Eqn:Regularization error L2} to obtain
  \[
    \| y_\varrho - y_{\varrho h} \|_{L^2(\Gamma)}^2 
    \leq
    \| y_\varrho \|_{L^2(\Gamma)}^2 +
    \varrho \, \| y_\varrho \|^2_{H^1(\Omega)} \leq 2 \,
    \| \overline{y} \|_{L^2(\Gamma)}^2 .
  \]
  With the triangle inequality and again using
  \eqref{Eqn:Regularization error L2} we therefore conclude
  \[
    \| y_{\varrho h} - \overline{y} \|_{L^2(\Gamma)} \leq
    \| y_{\varrho h} - y_\varrho \|_{L^2(\Gamma)} +
    \| y_\varrho - \overline{y} \|_{L^2(\Gamma)} \leq
    (1+\sqrt{2}) \, \| \overline{y} \|_{L^2(\Gamma)} \, .
  \]
For $\overline{y} = \overline{y}_{e|\Gamma} \in H^{1/2}(\Gamma)$
we use the triangle inequality twice and Cea's
estimate \eqref{Eqn:Cea} for arbitrary $y_h \in Y_h$ to write
\begin{eqnarray}\nonumber
    \| y_{\varrho h} - \overline{y} \|_{L^2(\Gamma)}^2
    & \leq & 2 \, \| y_\varrho - \overline{y} \|_{L^2(\Gamma)}^2 +
             2 \, \| y_\varrho - y_{\varrho h} \|^2_{L^2(\Gamma)} \\ \nonumber
    & \leq & 2 \, \| y_\varrho - \overline{y} \|_{L^2(\Gamma)}^2 +
             2 \, \| y_\varrho - y_h \|_{L^2(\Gamma)}^2 +
             2 \, \varrho \, \| y_\varrho - y_h \|^2_{H^1(\Omega)} \\
    \label{Eqn:Proof Step 3}
    & \leq & 6 \, \| y_\varrho - \overline{y} \|_{L^2(\Gamma)}^2 +
             4 \, \| \overline{y} - y_h \|_{L^2(\Gamma)}^2 \\
    && \hspace*{2cm} \nonumber
       + \, 4 \, \varrho \, \| y_\varrho - \overline{y}_e \|_{H^1(\Omega)}^2 +
       4 \, \varrho \, \| \overline{y}_e - y_h \|^2_{H^1(\Omega)} .
\end{eqnarray}
  In particular, for $y_h = P_h \overline{y}_e \in Y_h$
  being the $L^2$ projection of $\overline{y}_e$, we have the standard fe
  error estimates; see, e.g., \cite{Steinbach2008Book},
  \begin{equation}\label{Langer:Projection error}
    \| \overline{y}_e - P_h \overline{y}_e \|_{H^1(\Omega)} \leq c \,
    \| \overline{y}_e \|_{H^1(\Omega)}, \quad
    \| \overline{y}_e - P_h \overline{y}_e \|_{L^2(\Omega)} \leq c \,
    h \, \| \overline{y}_e \|_{H^1(\Omega)}.
  \end{equation}
  When using \cite[Theorem 3.6]{BehrndtGesztesyMitrea2025} and a space
  interpolation argument, we also have
  \[
    \| \overline{y}_e - P_h \overline{y}_e \|_{L^2(\Gamma)} \leq
    c \, \| \overline{y}_e - P_h \overline{y}_e \|_{H^{1/2}(\Omega)} \leq
    c \, h^{1/2} \, \| \overline{y}_e \|_{H^1(\Omega)} .
  \]
  Hence, using \eqref{Eqn:Regularization H12} and
  $\| \overline{y}_e \|_{H^1(\Omega)} \leq \| \overline{y} \|_{H^{1/2}(\Gamma)}$,
  this gives
  \[
    \| y_{\varrho h} - \overline{y} \|_{L^2(\Gamma)}^2
    \, \leq \, c \, (\varrho + h) \, \| \overline{y} \|_{H^{1/2}(\Gamma)}^2 ,
  \]
  and \eqref{Eqn:Energie Error H12} follows when choosing $\varrho=h$.
  
  Finally, we consider the case 
    $\overline{y} =
    \overline{y}_{e|\Gamma} \in H^1(\Gamma)$ with
    $\overline{y}_e \in H^{3/2+\varepsilon}(\Omega)$ for some
    $\varepsilon > 0$.
    In this case, we have
  \[
    \| \overline{y}_e - P_h \overline{y}_e \|_{H^1(\Omega)} \leq c \, h^{1/2} \,
    \| \overline{y}_e \|_{H^{3/2}(\Omega)}, \;
    \| \overline{y}_e - P_h \overline{y}_e \|_{L^2(\Omega)} \leq c \,
    h^{3/2} \, \| \overline{y}_e \|_{H^{3/2}(\Omega)},
  \]
  and \;
  $
    \| \overline{y} - P_h \overline{y}_e \|_{L^2(\Gamma)} \leq c \, h \,
    \| \overline{y}_e \|_{H^{3/2}(\Omega)} .
  $
  Together with \eqref{Eqn:Regularization error H1} we therefore conclude
  $
    \| y_{\varrho h} - \overline{y} \|_{L^2(\Gamma)}^2
    \, \leq \, c \, ( \varrho^2 + \varrho h + h^2) \,
    \| \overline{y}_e \|_{H^{3/2}(\Omega)}^2,
  $
  and, for $\varrho = h$, this gives \eqref{Eqn:Energie Error H1}.
\end{proof}

\noindent
While the regularization error estimates as given in
Lemma \ref{Lemma:RegularizationEstimates} are optimal in $\varrho$ for
$\overline{y} \in H^1(\Gamma)$, i.e.,
$\overline{y}_e \in H^{3/2}(\Omega)$, we can expect higher order
convergence for the fe approximation $P_h \overline{y}_e$ when
$\overline{y}_e$ is more regular. In particular for
$\overline{y}_e \in H^2(\Omega)$ we have
\[
  \| \overline{y}_e - P_h \overline{y}_e \|_{H^1(\Omega)} \leq c \, h \,
  \| \overline{y}_e \|_{H^2(\Omega)}, \;
  \| \overline{y}_e - P_h \overline{y}_e \|_{L^2(\Omega)} \leq c \,
  h^2 \, \| \overline{y}_e \|_{H^2(\Omega)} .
\]
Note that $\overline{y}_e \in H^2(\Omega)$ implies some
additional compatibility conditions on $\overline{y}$,
when $\Gamma = \partial \Omega$
is piecewise smooth. In particular, the tangential derivatives
of $\overline{y}$ have to be in $H^{1/2}(\Gamma)$, see, e.g., the
discussion in \cite{Jakovlev:1961,KhoromskijSchmidt:1999} in the
two-dimensional case. Then we can use \eqref{Eqn:Proof Step 3}
to conclude
\[
  \| y_{\varrho h} - \overline{y} \|_{L^2(\Gamma)}^2
  \, \leq \,
  c_1 \, \varrho^2 \, \| \overline{y} \|_{H^1(\Gamma)}^2 +
  c_2 \, (h^3 + \varrho \, h^2) \, \| \overline{y}_e \|^2_{H^2(\Omega)} ,
\]
and for $\varrho \leq h^{3/2}$ we obtain
\begin{equation}\label{Eqn:Error H32}
 \| y_{\varrho h} - \overline{y} \|_{L^2(\Gamma)}
  \, \leq \,
  c \, h^{3/2} \, \Big[ \| \overline{y}_e \|^2_{H^2(\Omega)} +
  \| \overline{y} \|_{H^{3/2}(\Gamma)}^2 \Big]^{1/2} .
\end{equation}
However, for $\overline{y} \in H^2(\Gamma)$, i.e.,
$\overline{y}_e \in H^{5/2}(\Omega)$, and using the best approximation
estimate for the boundary term, this gives
\[
  \| y_{\varrho h} - \overline{y} \|_{L^2(\Gamma)}^2 \, \leq \,
  c_1 \, h^4 \, \| \overline{y} \|_{H^2(\Gamma)}^2
  + c_2 \, \varrho^2 \, \| \overline{y} \|_{H^1(\Gamma)}^2 +
  c_3 \, \varrho \, h^2 \, \| \overline{y}_e \|^2_{H^2(\Omega)},
\]
and for $\varrho = h^2$ we finally obtain
\begin{equation}\label{Eqn:Error H2}
  \| y_{\varrho h} - \overline{y} \|_{L^2(\Gamma)} \, \leq \,
  c \, h^2 \Big[ \| \overline{y} \|_{H^2(\Gamma)}^2
  + \| \overline{y} \|_{H^1(\Gamma)}^2 +
  \| \overline{y}_e \|^2_{H^2(\Omega)} \Big]^{1/2} .
\end{equation}

%
%
\section{Fast solvers}
\label{Sec4:Solvers} 
Once the basis is chosen, the finite element scheme
\eqref{Eqn:Gradient equation FEM}
is equivalent to a linear system of finite element equations
that can be written in the form
\begin{equation}\label{Eqn:LinearSystem}
  [\widetilde{\mathbf{M}}_h + \varrho(\widetilde{\mathbf{K}}_h +
  \mathring{\mathbf{K}}_h)] {\mathbf y }_h = \mathbf{\overline{y}}_h,
\end{equation}
where the matrices $\widetilde{\mathbf{M}}_h$, $\widetilde{\mathbf{K}}_h$,
and $\mathring{\mathbf{K}}_h$ have the respective block representations
\begin{equation*}
\widetilde{\mathbf{M}}_h =
\left(
    \begin{array}{cc}
      {\mathbf 0} & {\mathbf 0} \\
      {\mathbf 0} & {\mathbf M}_{BB}
    \end{array}
\right),\quad
\widetilde{\mathbf{K}}_h =
\left(
    \begin{array}{cc}
      {\mathbf 0} & {\mathbf 0} \\
      {\mathbf 0} & \widetilde{\mathbf K}_{BB}
    \end{array}
\right), \quad
\mbox{and} \quad
\mathring{\mathbf{K}}_h =
\left(
    \begin{array}{cc}
      \mathring{\mathbf K}_{II} & \mathring{\mathbf K}_{IB} \\
      \mathring{\mathbf K}_{BI} & \mathring{\mathbf K}_{BB}
    \end{array}
\right),
\end{equation*}
when we split the unknowns (dofs) 
$\mathbf{y}_h = (\mathbf{y}_I^\top,\mathbf{y}_B^\top)^\top \in
\mathbb{R}^{(n-1)^d}$
into strict interior unknowns $\mathbf{y}_I  \in \mathbb{R}^{(n-3)^d}$ 
and near-boundary unknowns $\mathbf{y}_B  \in \mathbb{R}^{(n-1)^d-(n-3)^d}$.
The matrices ${\mathbf M}_{BB}$, $\widetilde{\mathbf K}_{BB}$ and
$\mathring{\mathbf{K}}_h$ are defined by the identities
$({\mathbf M}_{BB}\mathbf{y}_B,\mathbf{v}_B) = \langle y_h , v_h
\rangle_{L^2(\Gamma)}$,
$(\widetilde{\mathbf K}_{BB}\mathbf{y}_B,\mathbf{v}_B) =
\langle y_h , v_h \rangle_{H^1(\widetilde{\Omega}_h)}$, 
$(\mathring{\mathbf{K}}_h\mathbf{y}_h,\mathbf{v}_{h}) =
\langle y_h , v_h \rangle_{H^1(\mathring{\Omega}_h)}$
for all $\mathbf{y}_h = (\mathbf{y}_I^\top,\mathbf{y}_B^\top)^\top
\leftrightarrow y_h,v_h \in Y_h$ (fe  isomorphism),
where
$\mathring{\Omega}_h = \Omega \setminus \overline{\widetilde{\Omega}}_h
= (h,1-h)^d$ and $\widetilde{\Omega}_h =
\Omega \setminus \overline{\mathring{\Omega}}_h $,
whereas $\mathbf{\overline{y}}_h =
(\mathbf{0}_I^\top,\mathbf{\overline{y}}_B^\top)^\top \in \mathbb{R}^m$
is given by $(\mathbf{\overline{y}}_B, \mathbf{y}_B) =
\langle  \overline{y}_h , y_h \rangle_{L^2(\Gamma)}$
for all $\mathbf{y}_h = (\mathbf{y}_I^\top,\mathbf{y}_B^\top)^\top
\leftrightarrow y_h,v_h \in Y_h$.

Eliminating $\mathbf{y}_I = - \mathring{\mathbf K}_{II}^{-1} \mathring{\mathbf K}_{IB}\mathbf{y}_B$
from the linear system \eqref{Eqn:LinearSystem}, we arrive at the boundary Schur complement (SC)
system
\begin{equation}\label{Eqn:SchurComplemetSystem}
{\mathbf S}_{BB} \mathbf{y}_B = \mathbf{\overline{y}}_B
\end{equation}
with ${\mathbf S}_{BB} = {\mathbf M}_{BB} + \varrho (\widetilde{\mathbf K}_{BB} +  \mathring{\mathbf S}_{BB})
                       = {\mathbf M}_{BB} + \varrho (\widetilde{\mathbf K}_{BB} +  
      (\mathring{\mathbf K}_{BB} - \mathring{\mathbf K}_{BI} \mathring{\mathbf K}_{II}^{-1} \mathring{\mathbf K}_{IB}))$.
The Schur complement system \eqref{Eqn:SchurComplemetSystem} can efficiently be solved by means of 
the Conjugate Gradient (CG) method without any preconditioning 
since, for $\varrho \le h$, the Schur complement ${\mathbf S}_{BB}$ is spectrally equivalent to 
the boundary mass matrix $\mathbf{M}_{BB}$,
and in turn $\mathbf{M}_{BB}$ is spectrally equivalent to 
the lumped boundary mass matrix $\mbox{lump}(\mathbf{M}_{BB})$
and to $h^{d-1}{\mathbf I}_{BB}$.
Indeed, it is easy to see that
\begin{equation}\label{Eqn:SpectralEquivalenceInequalities}
{\mathbf M}_{BB} \le  {\mathbf S}_{BB} = {\mathbf M}_{BB} + \varrho (\widetilde{\mathbf K}_{BB} +  \mathring{\mathbf S}_{BB})
                \le (1 + \widetilde{c} \varrho h^{-1} +  \mathring{c} \varrho h^{-1}) {\mathbf M}_{BB} 
\end{equation}
with $h$ and $\varrho$ independent positive constants $\widetilde{c}$ and $\mathring{c}$
arising from the estimates $\lambda_{max}(\mathbf{M}_{BB}^{-1}\widetilde{\mathbf K}_{BB}) \le \widetilde{c} h^{-1}$
and $\lambda_{max}(\mathbf{M}_{BB}^{-1} \mathring{\mathbf S}_{BB}) \le \mathring{c} h^{-1}$
of the maximal eigenvalues of $\mathbf{M}_{BB}^{-1}\widetilde{\mathbf K}_{BB}$ and     
$\mathbf{M}_{BB}^{-1} \mathring{\mathbf S}_{BB}$, respectively. 
The choice $\varrho \le h$ delivers the desired result.
It is recommended to use $\mbox{lump}(\mathbf{M}_{BB})$ as diagonal preconditioner 
in the Preconditioned Conjugate Gradient (PCG) method since it provides the right scaling.
The numerical results presented in Section~\ref{Sec5:NumericalResults} show
that 
the system \eqref{Eqn:LinearSystem} can also efficiently  be solved by means of 
PCG with a simple Algebraic MultiGrid (AMG) preconditioner.

%
%
\section{Numerical results}
\label{Sec5:NumericalResults}
We first consider the target
\begin{equation}\label{Eqn:TargetHomogeneousNeumann}
  \overline{y} = \overline{y}(x) :=
  \cos(\pi x_1)\cos(\pi x_2)\cos(\pi x_3), \; x=(x_1,x_2,x_3) \in \Gamma,
\end{equation}
on the boundary $\Gamma=\partial \Omega$ of the domain $\Omega=(0,1)^3$.
We mention that $\overline{y}$ is the trace of a smooth function with
vanishing  normal derivative on the boundary $\Gamma$, i.e., we have
$\overline{y} \in H^2(\Gamma)$, and the error estimate
\eqref{Eqn:Error H2} applies when choosing $\varrho=h^2$.
We use a tensor product mesh as described in Section~\ref{Sec3:FEM}.
The initial mesh contains $5$ vertices in each direction,
and $125$ in total with mesh size $h=0.25$. 
We note that  we have only $3$ dofs in each direction,
and $27$ in total for the initial level.
Table~\ref{Tab:Examaple_Target_Homogeneous_Neuammn_tensor} provides the
numerical results starting from level $\ell = 1$ with $27$ dofs and
running to the finest discretization level $\ell = 7$
obtained by $6$ uniform refinements of the initial mesh.
The fourth column displays the $L^2$ error $\|y_\ell-\overline{y}\|_{L^2(\Gamma)}$
between the computed fe solution $y_\ell = y_{\varrho_\ell h_\ell}$ and the target $\overline{y}$ on the boundary. 
As expected, we observe second order of convergence;
cf. experimental order of convergence (eoc) given in the fifth column.
We first solve the original system \eqref{Eqn:LinearSystem} by means of 
AMG preconditioned CG iterations (\#AMG-PCG its), and observe that not more 
than $4$ iterations are needed in order to reach a relative residual
error of $10^{-9}$.
We further test the CG and lumped mass preconditioned CG 
solvers for the Schur complement equation \eqref{Eqn:SchurComplemetSystem}
until the relative residual error reaches $10^{-9}$. The number of Schur
complement CG (\#SCG its) and lumped mass preconditioned CG (\#SPCG its)
iterations are displayed in the last two columns of
Table \ref{Tab:Examaple_Target_Homogeneous_Neuammn_tensor}.
As expected from the theoretical results given in Section~\ref{Sec4:Solvers},
we see level-independent iteration numbers in both cases. Moreover, 
the lumped-mass preconditioner further reduces the number of iteration by
the scaling effect. We note that the action of $\mathring{\mathbf K}_{II}^{-1}$
to a vector within the multiplication of the Schur complement
$\mathbf{S}_{BB}$ by some vector (iterate) 
is realized by an AMG preconditioned CG method until the relative residual
error is reduced by a factor $10^{10}$. The latter accuracy of this inner PCG
iteration can be adapted (reduced~!) to the outer CG/PCG iteration
following the results from \cite{SimonciniSzyld2003SISC}.

\begin{table}[ht]
  \centering
  \begin{tabular}{|c|l|c|c|c|c|c|c|}
    \hline
    $\ell$ & \#dofs & $h$  & error & eoc &\#AMG-PCG its &\#SCG its &\#PSCG its\\
    \hline
    $1$ & $27$ & $2^{-2}$ & $1.669$e$-1$ &$-$ & $2$ & $1$ & $1$ \\
    $2$ & $343$ & $2^{-3}$ & $5.215$e$-2$ &$1.68$ & $3$& $6$ & $6$\\
    $3$ & $3,375$  & $2^{-4}$ & $1.347$e$-2$ &$1.95$ & $4$&$17$& $9$ \\
    $4$ & $29,791$ & $2^{-5}$ & $3.226$e$-3$ & $2.06$ & $4$&$24$& $9$ \\
    $5$ & $250,047$ & $2^{-6}$ & $7.707$e$-4$ & $2.07$ & $4$&$28$& $8$ \\
    $6$ & $2,048,383$ & $2^{-7}$ & $1.872$e$-4$ & $2.04$ & $4$&$29$& $6$ \\
    $7$ & $16,581,375$ & $2^{-8}$ & $4.605$e$-5$& $2.02$ & $4$&$29$& $4$ \\
    \hline
  \end{tabular}
  \caption{Target \eqref{Eqn:TargetHomogeneousNeumann}: error = $\|y_\ell-\overline{y}\|_{L^2(\Gamma)}$, 
  eoc = $\log_2 (\|y_\ell-\overline{y}\|_{L^2(\Gamma)} / \|y_{\ell+1}-\overline{y}\|_{L^2(\Gamma)})$,
   number of AMG CG iterations (\#AMG-PCG its) for the original system \eqref{Eqn:LinearSystem}, and
   number of CG  (\#SCG its) and  lumped-mass preconditioned CG (\#PSCG its) iterations 
   for the 
   SC
   system \eqref{Eqn:SchurComplemetSystem}, $\varrho = h^2$.}
  \label{Tab:Examaple_Target_Homogeneous_Neuammn_tensor}
\end{table}

\noindent
The second target
\begin{equation}\label{Eqn:TargetInhomogeneousNeumann}
  \overline{y}(x) := x_1^2-0.5x_2^2-0.5x_3^2, \;
  x=(x_1,x_2,x_3) \in \Gamma,
\end{equation}
is a trace of a smooth function which does not fulfill the homogeneous
Neumann boundary conditions on the boundary $\Gamma$ of $\Omega = (0,1)^3$,
i.e., $\overline{y} \in H^{3/2-\varepsilon}(\Gamma)$, and
$\overline{y}_e \in H^{2-\varepsilon}(\Omega)$, $\varepsilon >0$.
Similar as in \eqref{Eqn:Error H32} we therefore expect a reduced eoc
of about 1.5. We perform the same tests for the target
\eqref{Eqn:TargetInhomogeneousNeumann} as for the previous example for
the target \eqref{Eqn:TargetHomogeneousNeumann}. The results are given in
Table~\ref{Tab:Examaple_Target_NonHomogeneous_Neuammn_tensor}, where we have
used $\varrho = h^{3/2}$ as prescribed by the theory. 
The iteration numbers are again independent of $h$, and 
show the same behavior as in the case of the first example. 

\begin{table}[ht]
  \centering
  \begin{tabular}{|c|l|c|c|c|c|c|c|}
    \hline
    $\ell$ & \#Dofs & $h$  & error & eoc &\#AMG-PCG its &\#SCG its&\#PSCG its\\
    \hline
    $1$ & $27$ & $2^{-2}$ & $4.869$e$-1$ &$-$ & $2$ & $5$ & $5$\\
    $2$ & $343$ & $2^{-3}$ & $2.131$e$-1$ &$1.19$ & $3$ & $13$& $10$ \\
    $3$ & $3,375$  & $2^{-4}$ & $8.177$e$-2$ &$1.38$ & $5$ &$14$& $8$ \\
    $4$ & $29,791$ & $2^{-5}$ & $2.946$e$-2$ & $1.47$ & $4$&$15$& $8$ \\
    $5$ & $250,047$ & $2^{-6}$ & $1.036$e$-2$ & $1.51$ & $4$&$16$& $7$\\
    $6$ & $2,048,383$ & $2^{-7}$ & $3.619$e$-3$ & $1.52$ & $4$&$18$& $7$ \\
    $7$ & $16,581,375$ & $2^{-8}$ & $1.265$e$-3$& $1.52$ & $4$&$19$& $7$ \\
    \hline
  \end{tabular}
   \caption{Target \eqref{Eqn:TargetInhomogeneousNeumann}: Same agenda as Table~\ref{Tab:Examaple_Target_Homogeneous_Neuammn_tensor}, but now $\varrho = h^{3/2}$.}
  \label{Tab:Examaple_Target_NonHomogeneous_Neuammn_tensor}
\end{table}

Finally, we consider a discontinuous target $\overline{y}$ that is equal to 1 inside 
the region $[0.25,  0.75]\times[0.25, 0.75]$ and $0$ outside 
on each face $\Gamma_i \subset \Gamma = \partial \Omega, i=1,\dots,6$   the unit cube $\Omega = (0,1)^3$.
We perform the same tests  as for the previous examples.
The results are given in Table~\ref{Tab:Examaple_Target_Discontinuous_tensor}, 
where we have used $\varrho = h$ as prescribed by the theory. 
We observe the expected convergence rate eoc = 0.50. 
Furthermore, the iteration numbers are again independent of $h$, and 
show the same behavior as in the case of the preceding  two examples.
Figure~\ref{Fig:Examaple_Target_Discontinuous_tensor} shows the computed state $y_\ell$ on the 
surface $\Gamma$ of the cube $\Omega$ (left) and on three cuts through the cube.
\begin{table}[ht]
  \centering
  \begin{tabular}{|c|l|c|c|c|c|c|c|}
    \hline
    $\ell$ & \#Dofs & $h$  & error & eoc &\#AMG-PCG its &\#SCG its&\#PSCG its\\
    \hline
    $1$ & $27$ & $2^{-2}$ & $9.817$e$-2$ &$-$ & $3$ & $3$ & $3$\\
    $2$ & $343$ & $2^{-3}$ & $6.535$e$-1$ &$-$ & $4$ & $10$ & $10$ \\
    $3$ & $3,375$  & $2^{-4}$ & $4.768$e$-1$ & $0.45$ & $5$ & $17$ & $17$ \\
    $4$ & $29,791$ & $2^{-5}$ & $3.530$e$-1$ & $0.43$ & $4$& $22$& $19$ \\
    $5$ & $250,047$ & $2^{-6}$ & $2.540$e$-1$ & $0.47$ & $4$ & $23$ & $21$\\
    $6$ & $2,048,383$ & $2^{-7}$ & $1.809$e$-1$ & $0.49$ & $4$ & $23$ & $21$ \\
    $7$ & $16,581,375$ & $2^{-8}$ & $1.283$e$-1$& $0.50$ & $5$ & $23$ & $21$ \\
    \hline
  \end{tabular}
   \caption{Discontinuous target with 1 inside the region of $[0.25\quad 0.75]\times[0.25\quad  0.75]$ and 0 outside 
   this region on each face of the unique cube:  Same agenda as Table~\ref{Tab:Examaple_Target_Homogeneous_Neuammn_tensor}, but now $\varrho = h$.}
  \label{Tab:Examaple_Target_Discontinuous_tensor}
\end{table}

\begin{figure}
  \centering
  \includegraphics[scale=0.185]{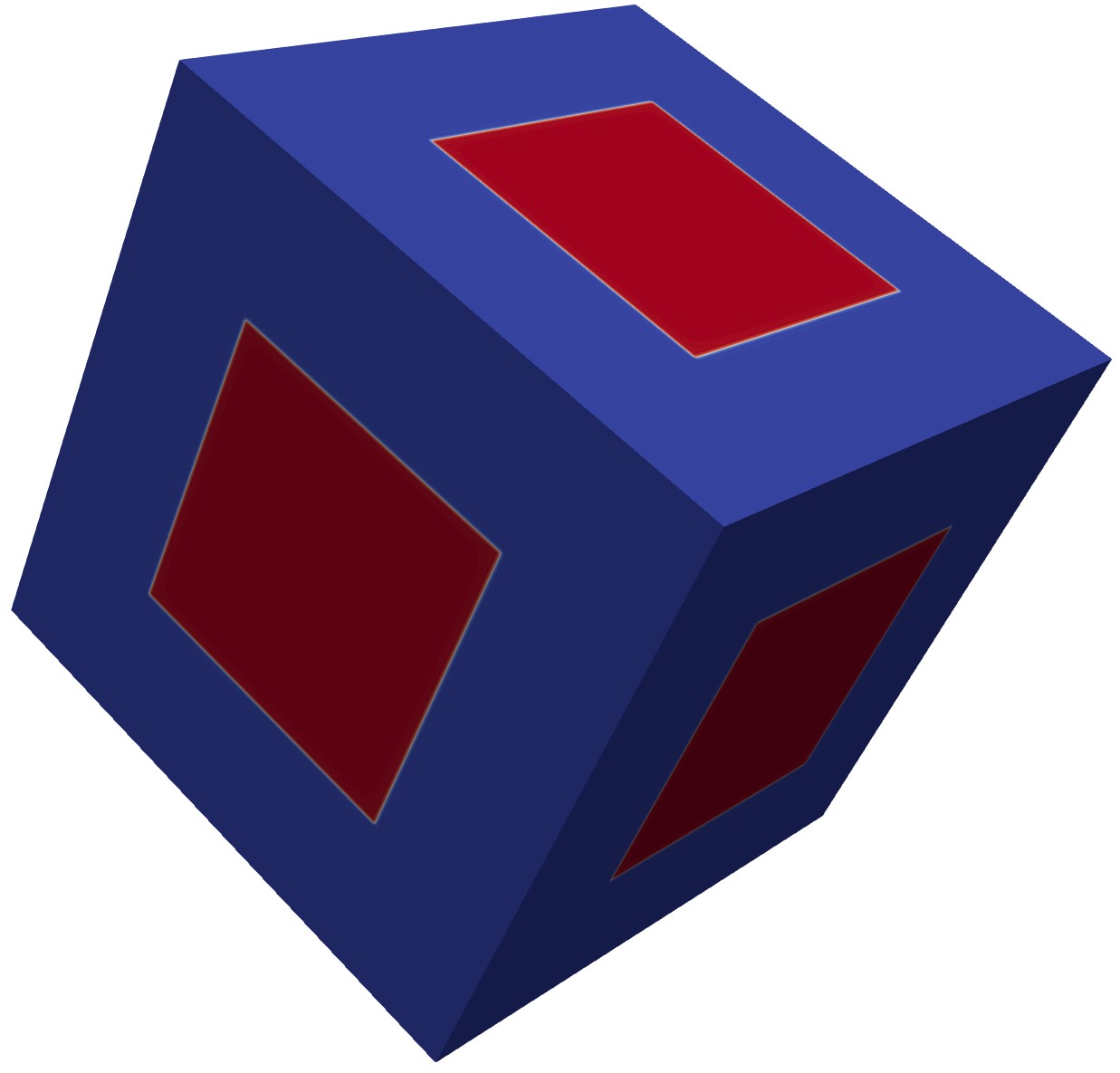}
  \includegraphics[scale=0.185]{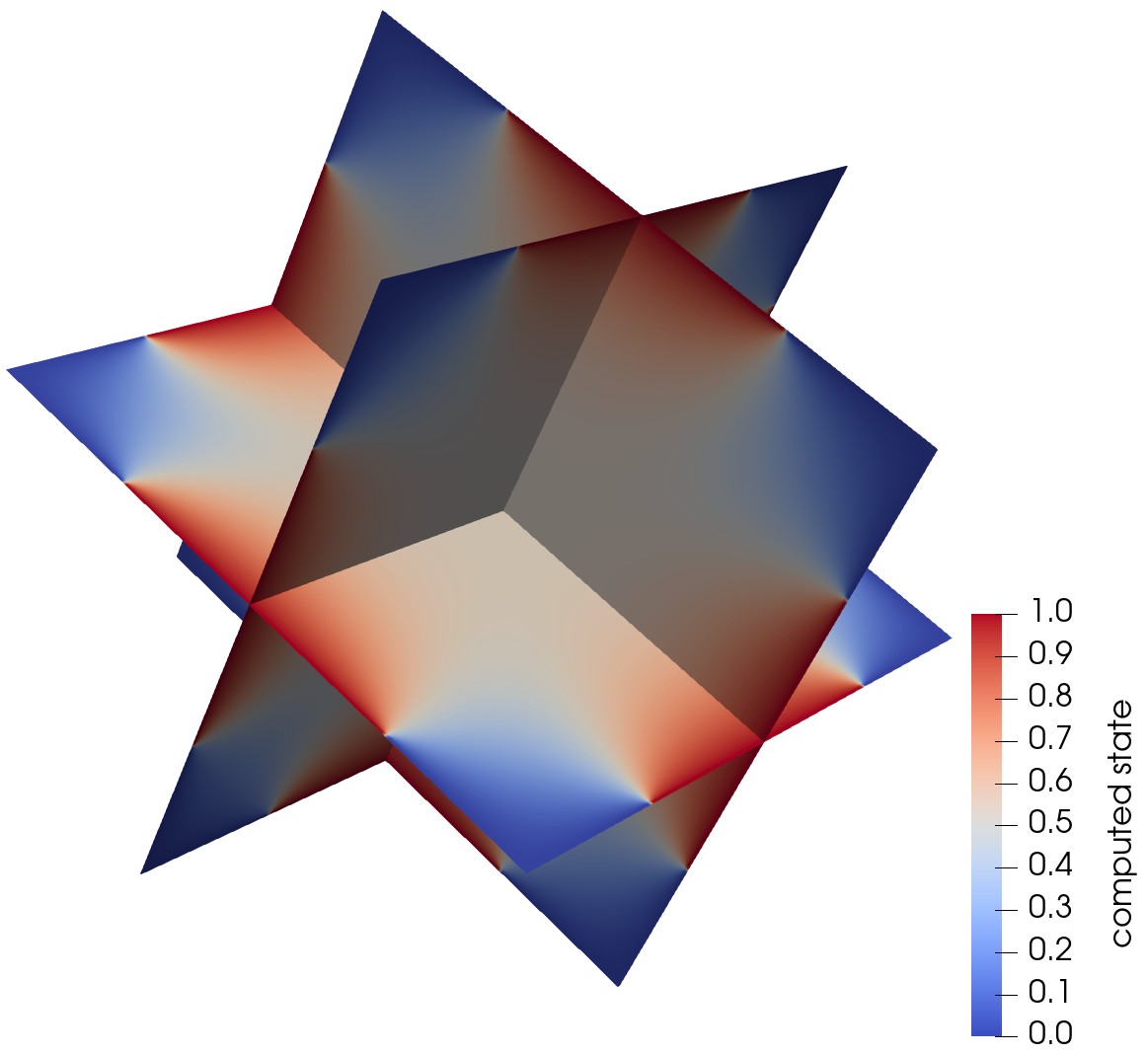}
  \caption{Computed state for the discontinuous target at level $7$.}
  \label{Fig:Examaple_Target_Discontinuous_tensor}
\end{figure}

%
%
\section{Conclusions and Outlook} 
\label{Sec6:Conclusions}
We have investigated a boundary value tracking, distributed ocp
for some elliptic bvp with homogeneous Neumann boundary conditions.
This ocp can be reduced to a state-based variational problem 
for functions from $H^1(\Omega)$ with vanishing normal derivatives in
$H^{-1/2}(\Gamma)$. We have constructed a conforming fe discretization for 
the special case of tensor-product meshes, 
and we have derived discretization error estimates and fast solvers.
The numerical experiments illustrate the theoretical results quantitatively.
In general, one has to include the homogeneous Neumann boundary conditions 
by means of Lagrange multipliers, which is a topic 
of future research. 
%
%
\section*{Acknowledgement}
We would like to thank RICAM for the computing resource support.
The financial support for the fourth author by the
Austrian Federal Ministry for Digital and Economic Affairs, the National
Foundation for Research, Technology and Development, and the Christian
Doppler Research Association is gratefully acknowledged.
%
%

%
%
\bibliographystyle{abbrv}
\bibliography{LangerLoescherSteinbachYang}
%
%
\newpage
\section*{Appendix}
In this appendix we provide the proofs of all results as used in this paper,
where $\widetilde{S}_h^1(0,1)$ is the modified finite element space of
piecewise linear and continuous basis functions $\varphi_i(x)$ for
$x \in (0,1)$ with zero derivatives $\varphi_i'(0)=\varphi_i'(1)=0$.

\begin{lemma}
  Let $y \in C([0,1])$ be a given continuous function with
  $y'(0)=y'(1)=0$, and satisfying $y'' \in L^2(0,1)$.
  For the piecewise linear interpolation
  \[
    I_h y(x) = \sum\limits_{i=1}^{n-1} y(x_i) \varphi_i(x).
  \]
  there hold the error estimates
  \begin{equation}\label{Appendix Interpolation Error L2 H2}
    \| y - I_h y \|_{L^2(0,1)} \leq \frac{1}{2} \, h^2 \,
    \| y'' \|_{L^2(0,1)},
  \end{equation}
  \begin{equation}\label{Appendix Interpolation Error L2 H1}
    \| y - I_hy \|_{L^2(0,1)} \leq \sqrt{2} \, h \, \| y' \|_{L^2(0,1)},
  \end{equation}
  and
  \begin{equation}\label{Appendix Interpolation Error H1 H2}
    \| y' - (I_h y)' \|_{L^2(0,1)} \leq \frac{1}{\sqrt{2}} \, h \,
    \| y'' \|_{L^2(0,1)} .
  \end{equation}
  Moreover, the interpolation operator is stable in $H^1(0,1)$, i.e.,
  \begin{equation}\label{Appendix Interpolation Stability}
    \| (I_hy)' \|_{L^2(0,1)} \leq \| y' \|_{L^2(0,1)} \quad
    \mbox{for all} \; y \in H^1(0,1) .
  \end{equation}
\end{lemma}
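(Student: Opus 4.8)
**

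The plan is to establish all four estimates by working on a single subinterval $(x_{i-1},x_i)$ and then summing, exploiting the fact that on the two boundary intervals $(x_0,x_1)$ and $(x_{n-1},x_n)$ the interpolant is the \emph{constant} $y(x_1)$ resp.\ $y(x_{n-1})$ rather than the affine interpolant. First I would note that on each interior interval $(x_{i-1},x_i)$, $i=2,\dots,n-1$, the operator $I_h$ reduces to the classical nodal linear interpolant, for which the Peano-kernel / Rolle-theorem arguments give the textbook bounds $\|y-I_hy\|_{L^2(x_{i-1},x_i)}\le \pi^{-2}h_i^2\|y''\|_{L^2(x_{i-1},x_i)}$ and $\|(y-I_hy)'\|_{L^2(x_{i-1},x_i)}\le \pi^{-1}h_i\|y''\|_{L^2(x_{i-1},x_i)}$; since the paper states the weaker constants $\tfrac12$ and $\tfrac1{\sqrt2}$, it suffices to use the cruder estimate obtained by writing $y-I_hy$ as a double integral of $y''$ and applying Cauchy--Schwarz twice. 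Summing the squares over all interior intervals and using $h_i\le h$ yields the interior contribution.

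The main obstacle — and the only genuinely non-standard point — is the treatment of the two boundary intervals, where $I_hy$ is constant. On $(x_0,x_1)$ we must bound $y(x)-y(x_1)$ in $L^2$ and $y'(x)$ in $L^2$ by $y''$. Here the hypothesis $y'(0)=0$ is essential: I would write $y'(x)=\int_0^x y''(t)\,dt$, so $|y'(x)|\le \int_{x_0}^{x_1}|y''|\le h_1^{1/2}\|y''\|_{L^2(x_0,x_1)}$ by Cauchy--Schwarz; squaring and integrating over $(x_0,x_1)$ gives $\|y'\|_{L^2(x_0,x_1)}^2\le h_1^2\|y''\|_{L^2(x_0,x_1)}^2$, which is consistent with (indeed slightly better than needed for) \eqref{Appendix Interpolation Error H1 H2} after we account for the fact that $(I_hy)'=0$ there. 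For the $L^2$ estimate of $y-y(x_1)$ on this interval, write $y(x)-y(x_1)=-\int_x^{x_1}y'(t)\,dt$ and insert the pointwise bound on $y'$ just derived; this produces a factor $h_1^2$ times $\|y''\|_{L^2(x_0,x_1)}$, again within the claimed constant. The interval $(x_{n-1},x_n)$ is handled identically using $y'(1)=0$. Adding the interior and boundary contributions and taking square roots gives \eqref{Appendix Interpolation Error L2 H2} and \eqref{Appendix Interpolation Error H1 H2}.

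For \eqref{Appendix Interpolation Error L2 H1} the argument is the same in structure but uses only one integration: on interior intervals the standard bound $\|y-I_hy\|_{L^2(x_{i-1},x_i)}\le h_i\|y'\|_{L^2(x_{i-1},x_i)}$ (from $y(x)-I_hy(x)=\int$ of $y'$ against a kernel bounded by $1$), and on the boundary intervals $\|y-y(x_1)\|_{L^2(x_0,x_1)}\le h_1\|y'\|_{L^2(x_0,x_1)}$ directly from $y(x)-y(x_1)=-\int_x^{x_1}y'$. Summing squares and using $h_i\le h$ gives $\|y-I_hy\|_{L^2(0,1)}^2\le h^2\|y'\|_{L^2(0,1)}^2$; the stated constant $\sqrt2$ leaves ample slack, so no sharpness is needed. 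Finally, for the $H^1$-stability \eqref{Appendix Interpolation Stability}: on each interior interval $(I_hy)'$ is the constant $\bigl(y(x_i)-y(x_{i-1})\bigr)/h_i = h_i^{-1}\int_{x_{i-1}}^{x_i}y'$, so by Cauchy--Schwarz $\|(I_hy)'\|_{L^2(x_{i-1},x_i)}^2 = h_i^{-1}\bigl(\int_{x_{i-1}}^{x_i}y'\bigr)^2\le \|y'\|_{L^2(x_{i-1},x_i)}^2$, while on the two boundary intervals $(I_hy)'=0\le \|y'\|_{L^2}$ trivially; summing over all intervals gives $\|(I_hy)'\|_{L^2(0,1)}^2\le\|y'\|_{L^2(0,1)}^2$, which is \eqref{Appendix Interpolation Stability}. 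Throughout, the only subtlety to flag is that the vanishing-derivative hypotheses at the endpoints are exactly what compensates for $I_h$ being constant (not affine) near the boundary; without them the boundary-interval terms would not be controllable by $\|y''\|$ or $\|y'\|$ with an $O(h)$ factor.
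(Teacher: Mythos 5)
Your overall strategy is exactly the paper's: treat the interior intervals with the standard nodal-interpolation estimates and handle the two boundary intervals separately, using that $I_hy$ is constant there and that the hypotheses $y'(0)=y'(1)=0$ make $y'=\int_0^x y''$ controllable. The stability argument \eqref{Appendix Interpolation Stability} is identical to the paper's and is correct, and your direct treatment of \eqref{Appendix Interpolation Error L2 H1} is fine (it even yields the constant $1$ rather than the paper's $\sqrt2$, which the paper obtains by the cruder route $\|y-I_hy\|\le \tfrac{1}{\sqrt2}h\|y'-(I_hy)'\|\le \tfrac{1}{\sqrt2}h\cdot 2\|y'\|$).

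There is, however, a quantitative gap in your boundary-interval bounds for \eqref{Appendix Interpolation Error L2 H2} and \eqref{Appendix Interpolation Error H1 H2}, and the lemma asserts explicit constants, so it matters. You bound $|y'(x)|\le\int_{x_0}^{x_1}|y''|\le h_1^{1/2}\|y''\|_{L^2(x_0,x_1)}$ and then integrate, obtaining $\|y'\|_{L^2(x_0,x_1)}^2\le h_1^2\|y''\|_{L^2(x_0,x_1)}^2$; since $(I_hy)'\equiv 0$ on $(x_0,x_1)$, this contributes $h^2\|y''\|^2_{L^2(x_0,x_1)}$ to $\|y'-(I_hy)'\|^2$, which already exceeds the budget $\tfrac12 h^2\|y''\|^2$ needed for the constant $\tfrac1{\sqrt2}$ in \eqref{Appendix Interpolation Error H1 H2}. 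Your claim that this is ``slightly better than needed'' is therefore not correct as written. The same loss compounds in the $L^2$ bound: inserting the crude pointwise bound into $y(x)-y(x_1)=-\int_x^{x_1}y'$ gives $h_1^4\|y''\|^2$ on the boundary interval rather than the $\tfrac14 h_1^4\|y''\|^2$ required for the constant $\tfrac12$ in \eqref{Appendix Interpolation Error L2 H2}. The fix is the one the paper uses: keep the length of the actual integration range in Cauchy--Schwarz, i.e.
\begin{equation*}
  |y'(x)|^2=\Bigl(\int_{x_0}^{x}y''(s)\,ds\Bigr)^2\le (x-x_0)\int_{x_0}^{x_1}[y''(s)]^2\,ds,
  \qquad \int_{x_0}^{x_1}(x-x_0)\,dx=\tfrac12 h_1^2,
\end{equation*}
which yields $\tfrac12 h_1^2\|y''\|^2$ for the derivative term, and analogously a second factor $\tfrac12 h_1^2$ for the function values, recovering $\tfrac14 h_1^4$. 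With that correction (and the corresponding care on the interior intervals, where the same device gives $\tfrac12 h_i^2$ via the Rolle point), your proof matches the stated constants and coincides with the paper's.
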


\begin{proof}
  For $ i=2,\ldots,n-1$ we have the standard interpolation error estimates
  \[
    \int_{x_{i-1}}^{x_i} [y(x)-I_hy(x)]^2 \, dx \leq
    \frac{1}{24} \, h^4 \, \int_{x_{i-1}}^{x_i} [y''(x)]^2 \, dx \, ,
  \]
  and
  \[
    \int_{x_{i-1}}^{x_i} [y'(x)-(I_hy)'(x)]^2 \, dx \leq
    \frac{1}{3} \, h^2 \, \int_{x_{i-1}}^{x_i} [y''(x)]^2 \, dx \, ,
  \]
  For the first interval $(x_0,x_1)$ we have $I_hy(x_1)=y(x_1)$, and hence
  we can write for $x \in (x_0,x_1)$
  \[
    y(x) - I_hy(x) =
    [I_hy(x_1)-I_hy(x)] - [y(x_1)-y(x)] = \int_x^{x_1} [(I_hy)'(s)-y'(s)] \, ds,
  \]
  and with the Cauchy--Schwarz inequality we obtain
  \begin{eqnarray*}
    [y(x) - I_hy(x)]^2
    & = & \left( \int_x^{x_1} [(I_hy)'(s)-y'(s)] \, ds \right)^2 \\
    & \leq & \int_x^{x_1} 1^2 \, ds
             \int_x^{x_1} [(I_hy)'(s)-y'(s)]^2 \, ds \\
    & \leq & (x_1-x)
             \int_{x_0}^{x_1} [(I_hy)'(s)-y'(s)]^2 \, ds,
  \end{eqnarray*}
  i.e.,
  \[
    \int_{x_0}^{x_1} [y(x) - I_hy(x)]^2 \, dx
    \leq
    \frac{1}{2} \, h^2 \, \int_{x_0}^{x_1} [(I_hy)'(x)-y'(x)]^2 \, dx .
  \]
  Due to $y'(x_0) = (I_hy)'(x_0)=0$ we further have
  \begin{eqnarray*}
    (I_hy)'(x)-y'(x)
    & = & [(I_hy)'(x)-(I_hy)'(x_0)] - [y'(x)-y'(x_0)] \\
    & = & \int_{x_0}^x [(I_hy)''(s)-y''(s)] \, ds \, = \,
          \int_{x_0}^x [-y''(s)] \, ds,
  \end{eqnarray*}
  and hence,
  \begin{eqnarray*}
    [(I_hy)'(x)-y'(x)]^2
    & = & \left( \int_{x_0}^x [-y''(s)] \, ds \right)^2 \leq
          \int_{x_0}^x 1^2 \, ds \int_{x_0}^x [y''(s)]^2 \, ds \\
    & \leq & (x-x_0) \int_{x_0}^{x_1} [y''(s)]^2 \, ds,
  \end{eqnarray*}
  i.e.,
  \[
    \int_{x_0}^{x_1} [(I_hy)'(x)-y'(x)]^2 dx 
    \leq \frac{1}{2} \, h^2 \, \int_{x_0}^{x_1} [y''(x)]^2 \, dx
  \]
  follows, and we also conclude
  \[
    \int_{x_0}^{x_1} [y(x) - I_hy(x)]^2 \, dx \leq \frac{1}{4} \, h^4 \,
    \int_{x_0}^{x_1} [y''(x)]^2 \, dx .
  \]
  For the last interval $(x_{n-1},x_n)$ the proof follows the same lines.
  When summing up the local contributions, the error estimates
  \eqref{Appendix Interpolation Error L2 H2} and
  \eqref{Appendix Interpolation Error H1 H2} follow.
  Moreover, for $i=2,\ldots,n-1$ and $x \in (x_{i-1},x_i)$, we write
  \[
    (I_hy)'(x) = \frac{1}{h} [y(x_i)-y(x_{i-1})] =
    \frac{1}{h} \int_{x_{i-1}}^{x_i} y'(s) \, ds,
  \]
  i.e.,
  \[
    [(I_hy)'(x)]^2 =
    \left( \frac{1}{h} \int_{x_{i-1}}^{x_i} y'(s) \, ds \right)^2 \leq
    \frac{1}{h} \int_{x_{i-1}}^{x_i} [y'(s)]^2 \, ds .
  \]
  Integration and summing up over all $i=2,\ldots,n-1$ gives, recall
  $(I_hy)'(x)=0$ for $x \in [x_0,x_1] \cup [x_{n-1},x_n]$,
  \[
    \| (I_hy)' \|^2_{L^2(0,1)} = \sum\limits_{i=2}^{n-1}
    \| (I_hy)' \|^2_{L^2(x_{i-1},x_i)} \leq \sum\limits_{i=2}^{n-1}
    \| y' \|^2_{L^2(x_{i-1},x_i)} \leq \| y' \|^2_{L^2(0,1)} ,
  \]
  i.e., \eqref{Appendix Interpolation Stability}.
  With this we finally conclude \eqref{Appendix Interpolation Error L2 H1},
  \[
    \| y - I_hy \|_{L^2(0,1)} \leq \frac{1}{\sqrt{2}} \, h \,
    \| y' - (I_hy)' \|_{L^2(0,1)} \leq
    \sqrt{2} \, h \, \| y' \|_{L^2(0,1)} .
  \]
\end{proof}

\noindent
Next we introduce the $L^2$ projection $Q_h : L^2(0,1) \to
\widetilde{S}^1_h(0,1)$, and present related error and stability estimates.

\begin{lemma}
  For any given $y \in L^2(0,1)$ we define
  $Q_hy \in \widetilde{S}_h^1(0,1)$ as unique
  solution of the variational formulation
  \begin{equation}\label{Appendix Def Qh}
    \langle Q_h y , y_h \rangle_{L^2(0,1)} =
    \langle y , y_h \rangle_{L^2(0,1)} \quad \mbox{for all} \;
    y_h \in \widetilde{S}_h^1(0,1) . 
  \end{equation}
  Then,
  \begin{equation}\label{Appendix L2 Stability Qh}
    \| Q_h y \|_{L^2(0,1)} \leq \| y \|_{L^2(0,1)} \quad \mbox{for all} \;
    y \in L^2(0,1).
  \end{equation}
  For $y \in H^2(0,1)$ with $y'(0)=y'(1)=0$ there hold the error estimates
  \begin{equation}\label{Appendix Projektion Error L2 H2}
    \| y - Q_h y \|_{L^2(0,1)} \leq \frac{1}{2} \, h^2 \,
    \| y'' \|_{L^2(0,1)},
  \end{equation}
  and
  \begin{equation}\label{Appendix Projektion Error H1 H2}
    \| y' - (Q_hy)' \|_{L^2(0,1)} \leq c \, h \,
    \| y'' \|_{L^2(0,1)} .
  \end{equation}
  Moreover, there holds the stability estimate
  \begin{equation}\label{Appendix Projektion H1 Stability}
    \| (Q_h y)' \|_{L^2(0,1)} \leq c \, \| y' \|_{L^2(0,1)} \quad
    \mbox{for all} \; y \in H^1(0,1), y'(0)=y'(1)=0.
  \end{equation}
\end{lemma}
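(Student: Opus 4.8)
The four claims will be established in the order they are stated, each reduced to the nodal interpolation estimates proved in the previous lemma together with the orthogonality of $Q_h$. The stability estimate \eqref{Appendix L2 Stability Qh} is immediate: $Q_h$ is the $L^2(0,1)$-orthogonal projection onto $\widetilde{S}_h^1(0,1)$, so testing \eqref{Appendix Def Qh} with $y_h = Q_h y$ and applying the Cauchy--Schwarz inequality gives $\|Q_h y\|_{L^2(0,1)}^2 = \langle y, Q_h y\rangle_{L^2(0,1)} \le \|y\|_{L^2(0,1)}\,\|Q_h y\|_{L^2(0,1)}$. The same orthogonality yields the best-approximation property $\|y - Q_h y\|_{L^2(0,1)} = \inf_{y_h \in \widetilde{S}_h^1(0,1)} \|y - y_h\|_{L^2(0,1)}$, which is the workhorse for the remaining estimates.

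For \eqref{Appendix Projektion Error L2 H2} I would simply choose $y_h = I_h y$ in the best-approximation inequality. This is admissible because $y \in H^2(0,1)$ embeds continuously into $C^1([0,1])$, so $y$ is continuous, the conditions $y'(0)=y'(1)=0$ make sense pointwise, and $y'' \in L^2(0,1)$; hence the interpolation lemma applies and $\|y - Q_h y\|_{L^2(0,1)} \le \|y - I_h y\|_{L^2(0,1)} \le \tfrac12 h^2 \|y''\|_{L^2(0,1)}$ by \eqref{Appendix Interpolation Error L2 H2}, with the very same constant.

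For the $H^1$-type estimates \eqref{Appendix Projektion Error H1 H2} and \eqref{Appendix Projektion H1 Stability} I would insert $I_h y$ and split $(Q_h y)' = (Q_h y - I_h y)' + (I_h y)'$. For $w_h := Q_h y - I_h y \in \widetilde{S}_h^1(0,1)$ one invokes an inverse inequality $\|w_h'\|_{L^2(0,1)} \le c\,h^{-1}\,\|w_h\|_{L^2(0,1)}$; this holds on the modified space because $w_h$ is still piecewise affine, in fact constant (hence with vanishing derivative) on the first and last subintervals, so the standard elementwise inverse estimate for affine functions applies. For \eqref{Appendix Projektion Error H1 H2} one then bounds $\|w_h\|_{L^2(0,1)} \le \|Q_h y - y\|_{L^2(0,1)} + \|y - I_h y\|_{L^2(0,1)} \le c\,h^2\|y''\|_{L^2(0,1)}$ by \eqref{Appendix Projektion Error L2 H2} and \eqref{Appendix Interpolation Error L2 H2}, so that $\|w_h'\|_{L^2(0,1)} \le c\,h\|y''\|_{L^2(0,1)}$, and combining with $\|y' - (I_h y)'\|_{L^2(0,1)} \le \tfrac1{\sqrt2} h\|y''\|_{L^2(0,1)}$ from \eqref{Appendix Interpolation Error H1 H2} gives the claim. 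For \eqref{Appendix Projektion H1 Stability}, where only $y \in H^1(0,1) \hookrightarrow C([0,1])$ is at hand, one uses $\|(I_h y)'\|_{L^2(0,1)} \le \|y'\|_{L^2(0,1)}$ from \eqref{Appendix Interpolation Stability} and $\|w_h\|_{L^2(0,1)} \le \|Q_h y - y\|_{L^2(0,1)} + \|y - I_h y\|_{L^2(0,1)} \le 2\sqrt2\,h\,\|y'\|_{L^2(0,1)}$ from best approximation and \eqref{Appendix Interpolation Error L2 H1}; the inverse inequality absorbs the factor $h^{-1}$ and yields $\|(Q_h y)'\|_{L^2(0,1)} \le c\,\|y'\|_{L^2(0,1)}$.

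The main obstacle is precisely the inverse inequality, or rather its constant: on a general decomposition the elementwise argument only produces a constant of order $h/h_{\min}$, so one needs (quasi-)uniformity of the mesh — satisfied for the uniform tensor-product meshes used in this paper — or, alternatively, one appeals to the fact that in one space dimension the $L^2$ projection onto continuous piecewise linears is $H^1$-stable with a mesh-independent constant, a property that carries over to $\widetilde{S}_h^1(0,1)$ since the two end intervals only support a constant. Everything else is a routine combination of orthogonality, the triangle inequality, and the interpolation estimates already established.
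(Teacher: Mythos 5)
Your proposal follows essentially the same route as the paper: testing \eqref{Appendix Def Qh} with $Q_hy$ for the $L^2$ stability, best approximation with $I_hy$ for \eqref{Appendix Projektion Error L2 H2}, and the triangle inequality combined with an inverse inequality and the interpolation estimates for \eqref{Appendix Projektion Error H1 H2} and \eqref{Appendix Projektion H1 Stability}. Your added remark on the mesh (quasi-)uniformity needed for the inverse inequality constant is a point the paper passes over silently, but it does not change the argument.
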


\begin{proof}
  The stability estimate \eqref{Appendix L2 Stability Qh} is a consequence
  of the variational formulation \eqref{Appendix Def Qh}
  when choosing $y_h = Q_hy$. By Cea's lemma and using
  \eqref{Appendix Interpolation Error L2 H2} we have
  \eqref{Appendix Projektion Error L2 H2},
  \[
    \| y - Q_h y \|_{L^2(0,1)} \leq
    \inf\limits_{y_h \in Y_h} \| y - y_h \|_{L^2(0,1)} \leq
    \| y - I_h y \|_{L^2(0,1)} \leq \frac{1}{2} \, h^2 \,
    \| y'' \|_{L^2(0,1)} .
  \]
  Moreover, using the triangle inequatlity twice, the inverse inequality
  \[
    \| y_h' \|_{L^2(0,1)} \leq \sqrt{12} \, h^{-1} \, \| y_h \|_{L^2(0,1)}
    \quad \mbox{for} \; y_h \in \widetilde{S}_h^1(0,1),
  \]
  and the error extimates \eqref{Appendix Interpolation Error H1 H2} as well as
  \eqref{Appendix Interpolation Error L2 H2} and
  \eqref{Appendix Projektion Error L2 H2}, this gives
  \eqref{Appendix Projektion Error H1 H2},
  \begin{eqnarray*}
    \| y' - (Q_hy)' \|_{L^2(0,1)}
    & \leq & \| y' - (I_hy)' \|_{L^2(0,1)} +
             \| (I_hy)'-(Q_hy)' \|_{L^2(0,1)} \\
    && \hspace*{-2.5cm}
       \leq \, \frac{1}{\sqrt{2}} \, h \, \| y'' \|_{L^2(0,1)} +
       2\sqrt{3} \, h^{-1} \, \| I_h y - Q_h y \|_{L^2(0,1)} \\
    && \hspace*{-2.5cm}
       \leq \, \frac{1}{\sqrt{2}} \, h \, \| y'' \|_{L^2(0,1)} +
       2\sqrt{3} \, h^{-1} \, \Big[
       \| I_h y - y \|_{L^2(0,1)} + \| y - Q_h y \|_{L^2(0,1)}
       \Big] \\
    && \hspace*{-2.5cm}
       \leq \, \frac{1}{2} \Big( \sqrt{2}+ 4 \sqrt{3} \Big)
       \, h \, \| y'' \|_{L^2(0,1)} .
  \end{eqnarray*}
  When using the same arguments we finally have
  \begin{eqnarray*}
    \| (Q_hy)' \|_{L^2(0,1)}
    & \leq & \| (Q_hy)' - (I_hy)'\|_{L^2(0,1)} + \| (I_hy)' \|_{L^2(0,1)} \\
    & \leq & 2 \sqrt{3} \, h^{-1} \| Q_hy-I_hy \|_{L^2(0,1)} +
             \| y' \|_{L^2(0,1)} \\
    & \leq & 2 \sqrt{3} \, h^{-1} \Big[
             \| Q_hy- y \|_{L^2(0,1)} + \| y - I_hy \|_{L^2(0,1)} \Big]
             + \| y' \|_{L^2(0,1)} \\
    & \leq & (1 + 4 \sqrt{6}) \, \| y' \|_{L^2(0,1)} . 
  \end{eqnarray*}
\end{proof}

\noindent
For $\Omega = (0,1)^d$ we use the tensor product finite element space
$Y_h = \otimes_{i=1}^d \widetilde{S}_h^1(0,1)$, and we define
$P_h := \otimes_{i=1}^d Q_{h_i} : Y \to Y_h$, where 
$Q_{h_i} : L^2(0,1) \to \widetilde{S}_h^1(0,1)$ are defined as in
\eqref{Appendix Def Qh}, but with respect to the component
$x_i \in {\mathbb{R}}$ of $x \in {\mathbb{R}}^d$. 

\begin{lemma}
  For $y \in Y \cap H^2(\Omega)$ and $P_h y \in Y_h$, there hold
  the error estimates
  \begin{equation}\label{Appendix Error L2 H2}
    \| y - P_h y \|_{L^2(\Omega)} \leq c \, h^2 \, |y|_{H^2(\Omega)},
  \end{equation}
  and
  \begin{equation}\label{Appendix Error H1 H2}
    \| \nabla (y - P_h y) \|_{L^2(\Omega)} \leq c \, h \, |y|_{H^2(\Omega)}.
  \end{equation}
\end{lemma}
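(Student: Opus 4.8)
The plan is to exploit the tensor-product structure of $P_h$, which is the composition $Q_{h_1}\cdots Q_{h_d}$ of the one-dimensional $L^2$-projections $Q_{h_i}$; since $Q_{h_i}$ acts only on the variable $x_i$, these operators pairwise commute. Two preliminary facts will be used repeatedly. First, integrating the one-dimensional stability estimate \eqref{Appendix L2 Stability Qh} over the remaining $d-1$ variables shows that each $Q_{h_i}$ is a contraction on $L^2(\Omega)$, and hence so is any composition of them. Second, for $y \in Y \cap H^2(\Omega)$ the normal derivative $\partial_n y$ lies in $L^2(\Gamma)$ and vanishes almost everywhere on $\Gamma$ (because $H^{1/2}(\Gamma)$ is dense in $L^2(\Gamma)$); consequently, for almost every fixed value of the variables $x_i$, $i\neq j$, the slice $x_j\mapsto y$ lies in $H^2(0,1)$ and has vanishing first derivative at $x_j=0$ and $x_j=1$. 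This is precisely the hypothesis under which the one-dimensional estimates \eqref{Appendix Projektion Error L2 H2} and \eqref{Appendix Projektion Error H1 H2} apply, and it lets us estimate the action of $Q_{h_j}$ on $y$ in the $x_j$-direction slicewise and then integrate over the other variables (Fubini).

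For the $L^2$-estimate \eqref{Appendix Error L2 H2} I would use the telescoping identity $I-P_h=\sum_{j=1}^{d}\bigl(Q_{h_1}\cdots Q_{h_{j-1}}\bigr)(I-Q_{h_j})$. The $j$-th summand is estimated by first using the $L^2$-contractivity of the leading factors and then \eqref{Appendix Projektion Error L2 H2} slicewise in $x_j$, giving a bound $\tfrac12 h^2\|\partial_{x_j}^2 y\|_{L^2(\Omega)}\le \tfrac12 h^2 |y|_{H^2(\Omega)}$; summing over $j$ yields \eqref{Appendix Error L2 H2}.

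For the gradient estimate \eqref{Appendix Error H1 H2} I would fix a direction $k$ and use the split $I-P_h=(I-Q_{h_k})R_k+(I-R_k)$, where $R_k$ denotes the composition of the $Q_{h_j}$ with $j\neq k$; this is valid by the commutativity. In the first term, $\partial_{x_k}$ commutes with $R_k$, so $\partial_{x_k}\bigl[(I-Q_{h_k})R_k y\bigr]=\partial_{x_k}(R_k y)-\partial_{x_k}\bigl(Q_{h_k}(R_k y)\bigr)$, which is controlled by \eqref{Appendix Projektion Error H1 H2} applied slicewise in $x_k$ to $g:=R_k y$; using $\partial_{x_k}^2(R_k y)=R_k(\partial_{x_k}^2 y)$ and the $L^2$-contractivity of $R_k$ this is bounded by $c\,h\,|y|_{H^2(\Omega)}$. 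In the second term, $\partial_{x_k}$ commutes with all of $R_k$, hence $\partial_{x_k}\bigl[(I-R_k)y\bigr]=(I-R_k)\partial_{x_k}y$; I would telescope $I-R_k$ over the $d-1$ directions $j\neq k$, bound the superfluous projection factors by $L^2$-contractivity, and apply slicewise in $x_j$ the bound $\|(I-Q_{h_j})g\|_{L^2(0,1)}\le c\,h\,\|g'\|_{L^2(0,1)}$, valid for every $g\in H^1(0,1)$ --- it follows from Cea's lemma for the $L^2$-projection and the elementary local estimate $\|g-I_h g\|_{L^2(x_{i-1},x_i)}\le c\,h_i\,\|g'\|_{L^2(x_{i-1},x_i)}$, i.e.\ the analogue of \eqref{Appendix Interpolation Error L2 H1} without endpoint conditions --- to $g=\partial_{x_k}y\in H^1(\Omega)$. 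This gives $\|\partial_{x_k}(y-P_h y)\|_{L^2(\Omega)}\le c\,h\,|y|_{H^2(\Omega)}$, and summing over $k$ yields \eqref{Appendix Error H1 H2}.

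The step I expect to require the most care is, in the first term of the gradient split, checking that the partially projected function $g=R_k y$ still meets the hypothesis of \eqref{Appendix Projektion Error H1 H2} in the $x_k$-direction, that is, that $\partial_{x_k}g$ vanishes on the faces $\{x_k=0\}$ and $\{x_k=1\}$. Since $R_k$ acts only on the tangential variables, it commutes both with $\partial_{x_k}$ and with taking the trace on these faces, so $\partial_{x_k}(R_k y)\big|_{x_k\in\{0,1\}}=R_k\bigl(\partial_{x_k}y\big|_{x_k\in\{0,1\}}\bigr)=R_k(0)=0$; making this commutation --- together with the slicewise reading of the $Y$-condition on $y$ --- precise is the only genuinely non-routine point, everything else being bookkeeping with Fubini's theorem and the contractivity of the one-dimensional projections.
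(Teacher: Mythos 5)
Your proof is correct and follows essentially the same route as the paper: telescoping the tensor-product projection $P_h=\otimes_{i}Q_{h_i}$ into one-directional differences, absorbing the superfluous factors via the $L^2$- (resp.\ $H^1$-) stability of the one-dimensional projections, and applying the one-dimensional error estimates slicewise via Fubini. The paper writes out only the $L^2$ case for $d=2$ and dismisses the gradient bound with ``the same lines''; your extra care with the endpoint condition for the partially projected function and with a first-order projection error estimate free of endpoint conditions supplies precisely the details that remark suppresses.
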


\begin{proof}
  For simplity, we consider the case $d=2$ only. Then, we conclude
  \begin{eqnarray*}
    \| y - P_h y \|_{L^2(\Omega)}
    & = & \| y - Q_{h_1} Q_{h_2} y \|_{L^2(\Omega)} \\
    & \leq & \| y - Q_{h_1} y \|_{L^2(\Omega)} +
             \| Q_{h_1} (y-Q_{h_2}y) \|_{L^2(\Omega)} \\
    & \leq & \| y - Q_{h_1} y \|_{L^2(\Omega)} + c \,
             \| y-Q_{h_2}y \|_{L^2(\Omega)} \\
    & \leq & c_1 \, h_1^2 \, \| \partial^2_{y_1y_1} y \|_{L^2(\Omega)} +
             c_2 \, h_2^2 \, \| \partial^2_{y_2y_2} y \|_{L^2(\Omega)} \\
    & \leq & c \, h^2 \, |y|_{H^2(\Omega)},
  \end{eqnarray*}
  when using the stability estimate \eqref{Appendix L2 Stability Qh},
  and the error estimate \eqref{Appendix Projektion Error L2 H2}, i.e.,
  \eqref{Appendix Error L2 H2}. The proof of
  \eqref{Appendix Error H1 H2} follows the same lines, but using the
  stability estimate \eqref{Appendix Projektion H1 Stability}, and the
  error estimate \eqref{Appendix Projektion Error H1 H2}.
\end{proof}

\noindent
It remains to prove a best approximation result for the boundary term as
used to derive the error estimate \eqref{Eqn:Error H2}.

\begin{lemma}
  Let $\Gamma = \partial \Omega$ for $\Omega = (0,1)^d$.
  Assume $y \in H^2(\Gamma)$, i.e., there exists an extension
  $y_e \in Y \cap H^{5/2}(\Omega)$. 
  Then, there holds the error estimate
  \begin{equation}
    \| y - P_h y \|_{L^2(\Gamma)} \leq c \, h^2 \, |y|_{H^2(\Gamma)} .
  \end{equation}
\end{lemma}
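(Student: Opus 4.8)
The plan is to exploit the tensor‑product structure and reduce the estimate to a single face of the cube, on which the trace of $Y_h$ is again a modified tensor‑product finite element space in $d-1$ variables; throughout, $P_hy$ is understood as the trace $(P_hy_e)|_\Gamma$, consistently with its use in \eqref{Eqn:Error H2}. Fix a face, say $F=\{x_d=0\}$, identified with $(0,1)^{d-1}$ via $x'=(x_1,\dots,x_{d-1})$, and write $P_h=Q^{(d-1)}_h\otimes Q_{h_d}$ with $Q^{(d-1)}_h:=\bigotimes_{i=1}^{d-1}Q_{h_i}$. Since $Q_{h_1},\dots,Q_{h_{d-1}}$ act on variables other than $x_d$, they commute with the restriction to $\{x_d=0\}$, so that $(P_hy_e)|_F=Q^{(d-1)}_h\big[(Q_{h_d}y_e)|_F\big]$ while $y|_F=y_e|_F$; hence
\begin{equation}\label{Eqn:PlanSplit}
  \big(y-P_hy_e\big)\big|_F
  = \big(y_e|_F-Q^{(d-1)}_h(y_e|_F)\big)
  + Q^{(d-1)}_h\big[\big((I-Q_{h_d})y_e\big)\big|_F\big]
  =: E_1 + Q^{(d-1)}_h E_2 ,
\end{equation}
and by the $L^2$‑stability \eqref{Appendix L2 Stability Qh} (iterated over the first $d-1$ directions) it suffices to bound $\|E_1\|_{L^2(F)}$ and $\|E_2\|_{L^2(F)}$.

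For $E_1$ I would invoke the $(d-1)$‑dimensional version of \eqref{Appendix Error L2 H2}. Its hypothesis requires $y_e|_F\in Y^{(d-1)}\cap H^2(F)$, i.e.\ that $y_e|_F$ has vanishing co‑normal derivative on $\partial F$. This holds: on the edge $E=F\cap\{x_j=0\}$ with $j<d$ the co‑normal derivative of $y_e|_F$ equals $-\partial_{x_j}y_e|_E$, and $\partial_{x_j}y_e$ vanishes identically on the face $\{x_j=0\}\supset E$ by the Neumann condition in \eqref{Eqn:Def Extension Target}; the other edges of $F$ are handled the same way. Therefore $\|E_1\|_{L^2(F)}\le c\,h^2\,|y|_{H^2(F)}$.

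The term $E_2$ is the crux. For a.e.\ $x'$ one has $E_2(x')=v_{x'}(0)-(Q_{h_d}v_{x'})(0)$ with $v_{x'}:=y_e(x',\cdot)\in H^2(0,1)$ and $v_{x'}'(0)=v_{x'}'(1)=0$. There is no pointwise superconvergence of the $L^2$‑projection at the Neumann endpoint: since the modified interpolant has $I_hv(0)=v(x_1)$, combining $v(x_1)-v(0)=\int_0^{x_1}(x_1-s)v''(s)\,ds$ with \eqref{Appendix Interpolation Error L2 H2}, \eqref{Appendix Projektion Error L2 H2} and the inverse inequality on the first element gives only $|v(0)-(Q_hv)(0)|\le c\,h^{3/2}\|v''\|_{L^2(0,1)}$, a rate genuinely attained when $v''$ concentrates near $0$. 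The gain to $h^2$ must therefore be produced by integrating over $F$: using the quasi‑locality of $Q_{h_d}$ (exponential decay of its Green's function), the error localizes to an $O(h)$‑slab adjacent to $\{x_d=0\}$, whence $\|E_2\|_{L^2(F)}^2\le c\,h^3\,\|\partial^2_{x_d}y_e\|_{L^2(F\times(0,Kh))}^2$ up to exponentially small terms; finally the extra regularity $y_e\in H^{5/2}(\Omega)$ — together with $\partial_{x_d}y_e\equiv 0$ on $\{x_d=0\}$ — enters through a slab estimate $\|\partial^2_{x_d}y_e\|_{L^2(F\times(0,Kh))}^2\le c\,h\,\|y_e\|_{H^{5/2}(\Omega)}^2$, giving $\|E_2\|_{L^2(F)}\le c\,h^2\,\|y_e\|_{H^{5/2}(\Omega)}$. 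Since the biharmonic extension \eqref{Eqn:Def Extension Target} is bounded $H^2(\Gamma)\to H^{5/2}(\Omega)$ and reproduces constants, $\|y_e\|_{H^{5/2}(\Omega)}$ may be replaced by $c\,|y|_{H^2(\Gamma)}$.

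Squaring \eqref{Eqn:PlanSplit}, summing over the $2d$ faces of the cube and taking square roots then yields the claim. The main obstacle is precisely the estimate for $E_2$: because the nodal error of the $L^2$‑projection at a Neumann endpoint is only $O(h^{3/2})$ in $\|v''\|_{L^2}$, the optimal boundary rate $O(h^2)$ is not available slice‑by‑slice and has to be recovered by averaging over the face and using both the quasi‑locality of $Q_{h_d}$ and the additional $H^{5/2}(\Omega)$‑regularity of the extension; making the slab estimate sharp (with the clean power $h$ rather than $h^{1-\varepsilon}$) is the delicate technical point.
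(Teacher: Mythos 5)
Your overall strategy coincides with the paper's: reduce to one face, split the error by the triangle inequality into a tangential part ($E_1$, handled by the $(d-1)$-dimensional projection estimate, which you justify correctly, including the verification that $y_e|_F$ inherits the vanishing co-normal derivatives) and a normal part ($E_2$), and recover the rate $h^2$ for $E_2$ from the Neumann condition $\partial_{x_d}y_e=0$ on the face together with the extra $H^{5/2}(\Omega)$-regularity. Your diagnosis of the difficulty is also exactly right: slice by slice the endpoint error of the one-dimensional operator is only $O(h^{3/2})\|\partial_{x_d}^2 y_e(x',\cdot)\|_{L^2}$, and the missing half power must come from the smallness of $\partial_{x_d}^2y_e$ on an $O(h)$-slab.

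The gap is that this crucial step is only sketched, and the two ingredients you invoke to close it are themselves nontrivial and unproved: (i) the quasi-locality (exponential decay) of the modified one-dimensional $L^2$-projection $Q_{h_d}$, needed because you keep the projection in the normal direction, and (ii) the sharp slab estimate $\|\partial^2_{x_d}y_e\|^2_{L^2(F\times(0,Kh))}\le c\,h\,\|y_e\|^2_{H^{5/2}(\Omega)}$. The paper avoids (i) altogether by passing from $Q_{h_d}$ to the modified interpolant $I_{h_d}$, whose trace on the face is explicit, $I_{h_d}y(x',0)=y_e(x',h)$, so that with $\partial_{x_d}y_e(x',0)=0$ one gets the exact representation $y(x',0)-I_{h_d}y(x',0)=-\int_0^h\!\int_0^s\partial_{x_dx_d}y_e(x',\tau)\,d\tau\,ds$ and hence the $h^3$-slab bound by Cauchy--Schwarz; and it realizes (ii) not as a global slab estimate but cell by cell, comparing $\partial_{x_dx_d}y_e$ with its piecewise constant tangential average and bounding the difference by the local Sobolev--Slobodeckij seminorm $|y_e|_{H^{5/2}}$, which produces the extra factor $h$ elementarily. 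So while your plan identifies the right mechanism, the proof of the lemma is not complete as written: either you must supply proofs of the decay and slab estimates you cite, or, more economically, replace $Q_{h_d}$ by $I_{h_d}$ and the slab estimate by the local averaging argument, as the paper does. (Note also that the comparison of $Q_{h_d}$ with $I_{h_d}$ \emph{on the trace} is itself a point that needs care; the paper's own inequality $\|y-Q_{h_2}y\|_{L^2(\Gamma_0)}\le\|y-I_{h_2}y\|_{L^2(\Gamma_0)}$ is not a best-approximation property in $L^2(\Gamma_0)$, so whichever route you take, this comparison deserves an explicit argument.)
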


\begin{proof}
  Again, we consider the case $d=2$ only.
  Let $\Gamma_0 := \{ (x,0) : x \in (0,1) \}$. With
  \eqref{Appendix Interpolation Error L2 H2} and
  \eqref{Appendix L2 Stability Qh} we then have
  \begin{eqnarray*}
    \| y - P_h y \|_{L^2(\Gamma_0)}
    & = & \| y - Q_{h_1} Q_{h_2}y \|_{L^2(\Gamma_0)} \\
    & \leq & \| y - Q_{h_1} y \|_{L^2(\Gamma_0)} +
             \| Q_{h_1} (y-Q_{h_2}y) \|_{L^2(\Gamma_0)} \\
    & \leq & \frac{1}{2} \, h^2 \, \| \partial_{x_1x_1} y \|_{L^2(\Gamma_0)} +
             \| y - Q_{h_2} y \|_{L^2(\Gamma_0)} \\
    & \leq & \frac{1}{2} \, h^2 \, \| y \|_{H^{5/2}(\Omega)} +
             \| y - I_{h_2} y \|_{L^2(\Gamma_0)} .
  \end{eqnarray*}
  For $x \in \Gamma_0$ we can write, using $\partial_{x_2}y_e(x,s)_{|s=0}=0$,
  \begin{eqnarray*}
    y(x,0) - I_{h_2}y(x,0)
    & = & y_e(x,0) - y_e(x,h) = - \int_0^h \partial_{x_2} y_e(x,s) \, ds \\
    && \hspace*{-2cm}
       = \, \int_0^h [\partial_{x_2}y_e(x,0) - \partial_{x_2}y_e(x,s)] \, ds
       \, = \, - \int_0^h \int_0^s \partial_{x_2x_2}y_e(x,\tau) \, d\tau ds .
  \end{eqnarray*}
  Hence we obtain
   \begin{eqnarray*}
     [y(x,0) - I_{h_2}y(x,0)]^2
     & = &  \left[
           \int_0^h \int_0^s 1 \cdot
           \partial_{x_2x_2}y_e(x,\tau) \, d\tau ds
           \right]^2 \\
     & \leq & \int_0^h 1^2 \, ds \int_0^h \left[
              \int_0^s 1 \cdot
              \partial_{x_2x_2}y_e(x,\tau) \, d\tau \right]^2 ds \\
     & \leq & h \int_0^h \int_0^s 1^2 \, d\tau
              \int_0^s [\partial_{x_2x_2}y_e(x,\tau)]^2 d\tau ds \\
     & \leq & h \int_0^h s \, ds \int_0^h [\partial_{x_2x_2}y_e(x,\tau)]^2
              d\tau \\
     & = & \frac{1}{2} \, h^3 \,
           \int_0^h [\partial_{x_2x_2}y_e(x,\tau)]^2 d\tau .
   \end{eqnarray*}
   Next we consider $x \in (x_{i-1},x_i)$, $i=1,\ldots,n$, and
   use the piecewise constant $L^2$ projection
   \[
     \hat{y}_i(\tau) = \frac{1}{h} \int_{x_{i-1}}^{x_i}
     \partial_{x_2x_2 }y_e(\eta,\tau) \, d\eta,
   \]
   to write
   \[
     [\partial_{x_2x_2}y_e(x,\tau)]^2
     \leq
     2 \, [\partial_{x_2x_2}y_e(x,\tau) - \hat{y}_i(\tau)]^2 +
     2 \, [\hat{y}_i(\tau)]^2,
   \]
   i.e.,
   \[
     [y(x,0) - I_{h_2}y(x,0)]^2
     \, \leq \, h^3 \int_0^h
     [\partial_{x_2x_2}y_e(x,\tau) - \hat{y}_i(\tau)]^2 d\tau
     + h^3 \int_0^h [\hat{y}_i(\tau)]^2 d\tau ,
   \]
   and integration over $x \in (x_{i-1},x_i)$ gives
   \begin{eqnarray*}
     && \int_{x_{i-1}}^{x_i} [y(x,0) - I_{h_2}y(x,0)]^2 dx \\
     && \hspace*{1cm}\leq \, h^3 \int_{x_{i-1}}^{x_i }\int_0^h 
        [\partial_{x_2x_2}y_e(x,\tau) - \hat{y}_i(\tau)]^2 d\tau dx
        + h^3 \int_{x_{i-1}}^{x_i} \int_0^h [\hat{y}_i(\tau)]^2 d\tau dx .
   \end{eqnarray*}
   Using
   \begin{eqnarray*}
     \int_{x_{i-1}}^{x_i}
     [\partial_{x_2x_2}y_e(x,\tau) - \hat{y}_i(\tau)]^2 dx
     & = & \int_{x_{i-1}}^{x_i}
           \left[
           \partial_{x_2x_2} y_e(x,\tau) - \frac{1}{h} \int_{x_{i-1}}^{x_i}
           \partial_{x_2x_2} y_e(\eta,\tau) \, d\eta
           \right]^2 dx \\
     && \hspace*{-3cm} = \, \int_{x_{i-1}}^{x_i} \left[
           \frac{1}{h} \int_{x_{i-1}}^{x_i}
           [\partial_{x_2x_2}y_e(x,\tau)-\partial_{x_2x_2}y_e(\eta,\tau)] \, d\eta
           \right]^2 dx \\
     && \hspace*{-3cm} = \, \frac{1}{h^2} \int_{x_{i-1}}^{x_i} \left[
           \int_{x_{i-1}}^{x_i}
           \frac{\partial_{x_2x_2}y_e(x,\tau)-\partial_{x_2x_2}y_e(\eta,\tau)}
           {|x-\eta|} \, |x-\eta| \, d\eta
           \right]^2 dx \\
     && \hspace*{-3cm} \leq \, \frac{1}{h^2} \int_{x_{i-1}}^{x_i}
        \int_{x_{i-1}}^{x_i} |x-\eta|^2 \, d\eta
        \int_{x_{i-1}}^{x_i}
        \frac{[\partial_{x_2x_2}y_e(x,\tau)-\partial_{x_2x_2}y_e(\eta,\tau)]^2}
           {|x-\eta|^2} \, d\eta dx \\
     && \hspace*{-3cm} \leq \, h \, \int_{x_{i-1}}^{x_i}
        \int_{x_{i-1}}^{x_i}
        \frac{[\partial_{x_2x_2}y_e(x,\tau)-\partial_{x_2x_2}y_e(\eta,\tau)]^2}
           {|x-\eta|^2} \, d\eta dx,
   \end{eqnarray*}
   we conclude
   \begin{eqnarray*}
     && \int_{x_{i-1}}^{x_i} [y(x,0) - I_{h_2}y(x,0)]^2 dx \\
     && \hspace*{1cm}\leq \, h^4 \, \int_{x_{i-1}}^{x_i} \int_{x_{i-1}}^{x_i}
        \frac{[\partial_{x_2x_2}y_e(x,\tau)-\partial_{x_2x_2}y_e(\eta,\tau)]^2}
        {|x-\eta|^2} \, d\eta dx
        + h^4 \int_0^h [\hat{y}_i(\tau)]^2 d\tau  \\
     && \hspace*{1cm} \leq \, h^4 \, |y_e|^2_{H^{5/2}((x_{i-1},x_i)\times(0,h))}.
   \end{eqnarray*}
   When summing up all contrubutions, this finally gives
   \[
     \| y - P_hy \|_{L^2(\Gamma_0)} \leq
     c \, h^2 \, |y_e|_{H^{5/2}(\Omega)}.
   \]
   For all other boundary edges, the proof follows the same lines.
\end{proof}

\end{document}